\newcolumntype{C}{>{\centering\arraybackslash}X}
\newtheorem{thm}{Theorem}[section]
\newtheorem{lem}[thm]{Lemma}
\newtheorem{exa}[thm]{Example}
\newtheorem{pro}[thm]{Proposition}
\newtheorem{defn}[thm]{Definition}
\newtheorem{cor}[thm]{Corollary}
\newtheorem{rem}[thm]{Remark}
\def\ps@pprintTitle{%
	\let\@oddhead\@empty
	\let\@evenhead\@empty
	\let\@oddfoot\@empty
	\let\@evenfoot\@oddfoot
}
\begin{document}
\begin{frontmatter}
\title{The sets of flattened partitions with forbidden patterns}
\author[label1]{Olivia Nabawanda\footnote{Corresponding author.}}
 \address[label1]{Department of Mathematics, Makerere University, Kampala, Uganda; e-mail:onabawanda@must.ac.ug}
 \author[label2]{Fanja Rakotondrajao}
 \address[label2]{D\'epartement de  Math\'ematiques et Informatique, Universit\'e d'Antananarivo, Madagascar; e-mail: frakoton@yahoo.fr}
\begin{abstract}
The study of pattern avoidance in permutations, and specifically in flattened partitions is an active area of current research. In this paper, we count the number of distinct flattened partitions over $[n]$ avoiding a single pattern, as well as a pair of two patterns. Several counting sequences, namely Catalan numbers, powers of two, Fibonacci numbers and Motzkin numbers arise. We also consider other combinatorial statistics, namely runs and inversions, and establish some bijections in situations where the statistics coincide. 
\end{abstract}

\begin{keyword}
Catalan numbers \sep Fibonacci numbers \sep flattened partitions \sep Motzkin numbers \sep pattern \sep run \sep inversion\\
\MSC[2010] 05A05 \sep 05A10 \sep 05A15 \sep 05A18
\end{keyword}
\end{frontmatter}
\section{Introduction and preliminaries}
Counting permutations based on avoidance of a given pattern has been studied from various perspectives in both enumerative and algebraic combinatorics \cite{ claesson2001generalized, krattenthaler2001permutations, knuth1998art, wilf2002patterns, bona2016combinatorics, elizalde2003consecutive, kitaev2011patterns}. It provides an easier way for understanding the properties of different combinatorial objects  through bijective proofs.\\ For a fixed positive integer $n$, we define the set $[n]\coloneqq\{1, 2, \ldots, n\}$.
A permutation $\sigma$ over $[n]$ will be represented as a word $\mathit{\sigma(1)\sigma(2)\cdots\sigma(n)}$, where $\sigma(i)$ is the image of $i$ under $\sigma$. We say that $\sigma$ has an occurrence of a pattern $\tau$, if there exists a subsequence in $\sigma$ which is order-isomorphic to $\tau$, else we say that $\sigma$ is $\tau$-avoiding \cite{mansour2015counting}. The elements of an occurrence $\tau$ may be consecutive or non consecutive in $\sigma$. For example, $\sigma = 7345612$ contains several $231$ occurrences among which include: $561, 352, 362, 452, 461$ and $463$, and is $213$-avoiding. A \textit{run} in $\sigma$ is a subsequence of the form $\sigma(i)\sigma(i+1)\cdots \sigma(i+p)\sigma(i+p+1)$ where $i, i+1, \ldots, i+p$ are consecutive ascents, $i-1$ (if it does exist) and $i+p+1$ are non-ascents, where $i \in [n]$ \cite{nabawanda2020run}. We call $\sigma(i)$ the starting point of the run. A \textit{flattened partition} is a permutation consisting of runs arranged from left to right such that their starting points are in increasing order \cite{nabawanda2020run}. Notice that if $\sigma$ is a flattened partition, then $\sigma(1) = 1$. For example, the permutation $\sigma = 139278456$ is a flattened partition with three runs namely $139, 278, 456$ whose starting points are $1, 2$, and $4$ respectively. 
Given a non-empty finite subset $S$ of positive integers, a {\it set partition} $P$ of $S$ is a collection of disjoint non-empty subsets $B_{1},B_{2}, \ldots, B_{k}$ of $S$ (called blocks) such that $\displaystyle\cup_{i = 1}^{k}B_{i} = S$ \cite{mansour2012combinatoric, rota1964number}. We shall maintain the name and notion of ``flattened partition" introduced by Callan \cite{callan2009pattern}. Callan borrowed the notion ``$\tt Flatten$" from the {\it Mathematica} programming language, where it acts by taking lists of sets arranged in increasing order, removes their parentheses, and writes them as a single list \cite{stephen1999mathematica}. However, different set partitions can have the same resulting flattened partition under the command ``$\tt{Flatten}$" from Mathematica. For example $\mathtt{Flatten}(1|2|3) = 123 = \mathtt{Flatten}(12|3)$. In his work, Callan \cite{callan2009pattern} studied partitions of a set $[n]$, whose flattening avoids a single $3$-letter pattern. Along the same direction, Mansour et. al. \cite{mansour2015counting} also studied avoidance of a single $3$-letter pattern in set partitions of size $n$. For more details on flattened partitions and pattern avoidance, see \cite{liu2015pattern, mansour2011pattern, mansour2014recurrence}. We study the number of distinct flattened partitions avoiding a pattern $\tau$, using fairly similar methods as those used by Mansour et al. \cite{mansour2015counting}, though we work on different sets of permutations.
\begin{defn}
Let $(i_{1}, i_{2}, i_{3}), (j_{1}, j_{2}, j_{3}) \in \mathbb{N}^3$ where $\mathbb{N}$ denotes the set of positive integers. We say that $(i_{1}, i_{2}, i_{3})$ is \emph{lexicographically smaller} than $(j_{1}, j_{2}, j_{3})$, denoted by $(i_{1}, i_{2}, i_{3}) \leq_{lex} (j_{1}, j_{2}, j_{3})$, if we have
\begin{enumerate}
\item [(i)] $i_{1} < j_{1}$ or 
\item [(ii)]  $i_{1} = j_{1}$ and $i_{2} < j_{2}$ or
\item [(iii)] $i_{1} = j_{1}$, $i_{2} = j_{2}$ and $i_{3} \leq j_{3}$.
\end{enumerate}	
\end{defn}
A permutation $\sigma$ over $[n]$ has an \textit{ascent} or \textit{descent} at position $i$ if  $\sigma(i) < \sigma(i+1)$ or $\sigma(i) > \sigma(i+1)$, respectively, for $i \in [n-1]$.

A permutation $\sigma$ over $[n]$ has an \emph{inversion} if there exists a pair $(\pi(i), \pi(j))$ for $i < j$ such that $\pi(i) > \pi(j)$. 

In this paper, we count the number of \emph{distinct} flattened partitions over $[n]$ avoiding an occurrence of a single pattern $\tau$, as well as a pair of patterns $(\tau_{1}, \tau_{2})$. Let $\tau \in \{123, 132, 213, 231, 312, 321\}$, $S(n; \tau)$ the set of all permutations over $[n]$  which are $\tau$-avoiding, $\mathcal{F}(n; \tau)$ the set of $\tau$-avoiding flattened partitions over $[n]$, and $\mathcal{F}(n; \tau_{1}, \tau_{2})$ the set of $(\tau_{1}, \tau_{2})$-avoiding flattened partitions over $[n]$. Let $|\mathcal{F}(n; \tau)|$ and $|\mathcal{F}(n; \tau_{1}, \tau_{2})|$ denote the cardinalities of the sets $\mathcal{F}(n; \tau)$ and $\mathcal{F}(n; \tau_{1}, \tau_{2})$ respectively. Let $\mathtt{inv}(\pi) = |\{{ (i, j) : i < j, \pi(i) > \pi(j)}\}|$ denote the number of inversions in a permutation $\pi$ and $\mathtt{runs}(\pi)$ the number of runs in $\pi$. In Table \ref{tab:t1}, we give the first few values of the numbers of $3$-letter pattern avoiding flattened partitions and the OEIS sequences they correspond to (for detailed discussions on these sequences, see Section \ref{nice}).
\begin{table}[H]
\centering
\begin{tabular}{ccc}
  
   Pattern $\tau$   & $|\mathcal{F}(n; \tau)|_{n \geq 1}$& OEIS sequence\\
   \toprule
    $ 123$ & $1, 1, 1, 0, 0, 0, 0,  \ldots.$ & $\cdots$\\
    $ 132$ & $1, 1, 1, 1, 1, 1, 1, \ldots.$ & $A000012$\\
     $213$ & $1, 1, 2, 4, 8, 16, 32, \ldots.$ & $A011782$\\
    $ 231$ & $1, 1, 2, 4, 9, 21, 51, \ldots.$ & $A001006$\\
    $ 312$ & $1, 1, 2, 4, 8, 16, 32, \ldots.$ & $A011782$\\
    $ 321$ & $1, 1, 2, 5, 14, 42, 132, \ldots.$ & $A000108$\\
   \bottomrule
   \end{tabular}
   \caption{The numbers of $3$-letter pattern avoiding flattened partitions}
   \label{tab:t1}
\end{table}
We say that a flattened partition avoids two patterns $\tau_{1}$ and $\tau_{2}$ if it does not contain an occurrence of either $\tau_{1}$ or $\tau_{2}$ or both. In Section \ref{sec3}, we explain the combinatorics behind the sequences in Table \ref{tab:tab9} by giving their recurrence relations and corresponding combinatorial proofs. The sequences in Table \ref{tab:tab9} were obtained by computing for the first few values of $n$ (for detailed discussions on these sequences, see Section \ref{sec3}).
\begin{table}[H]
	\centering
	\begin{tabular}{p{4cm}cc}
		Pattern $\tau$   & $|\mathcal{F}(n; \tau)|_{n \geq 1}$ & OEIS sequence\\
		\toprule
		$(213, 231)$, $(231, 312)$ & $1, 1, 2, 3, 5, 8, 13, \ldots$ & $A000045$\\
		\midrule
		$(132, 213)$, $(132, 231)$, $(132, 312)$, $(132, 321)$ &
		 $1, 1, 1, 1, 1, 1, 1, \ldots$ & $A000012$\\
		\midrule
		$(213, 321)$, $(231, 321)$, $(312, 321)$ & $1, 1, 2, 4, 8, 16, 32, \ldots$ & $A011782$\\
		\midrule
		$(213, 312)$ & $1, 1, 2, 3, 4, 5, 6 \ldots$ & $A028310$\\
		\midrule
		$(123, 132)$ & $1, 1, 0, 0, 0, 0, 0 \ldots$ & $\cdots$\\
		\midrule
	$(123, 213)$, $(123, 231)$, $(123, 312)$, $(123, 321)$  & $1, 1, 1, 0, 0, 0, 0 \ldots$ & $\cdots$\\
		\bottomrule
	\end{tabular}
	\caption{Summary of $(\tau_{1}, \tau_{2})$-avoiding flattened partitions}
	\label{tab:tab9}
\end{table}
 In Section \ref{nice}, we explain the combinatorics behind the sequences in Table \ref{tab:t1} by finding recurrence relations, as well as establishing bijections between flattened partitions avoiding certain patterns and other combinatorial structures counted by the same sequences. One such bijection is given in Theorem \ref{fanja}, where we show that the lengths of runs of the permutations in the involved sets are preserved. Another interesting result is Theorem \ref{learn}, where we show,  using runs, that $213$-avoiding flattened partitions are counted by powers of two. We also describe the recurrence relation for $312$-avoiding flattened partitions in terms of the number of inversions preserved or created. In Section \ref{sec3}, we find recurrence relations for flattened partitions in the set $\mathcal{F}(n; \tau_{1}, \tau_{2})$ and their corresponding combinatorial proofs.
\section{Three letter pattern-avoiding flattened partitions}\label{nice}
We shall consider avoidance of patterns $\tau \in \{123, 132, 213, 231, 312, 321\}$. The cases of $123$-avoiding and $132$-avoiding flattened partitions are not very interesting: the counting sequences are $(|\mathcal{F}(n; 123)|)_{n \geq 1} = (1, 1, 1, 0, 0, \ldots)$ and $(|\mathcal{F}(n; 132)|)_{n \geq 1} = (1, 1, 1, 1, 1, \ldots)$ respectively.
\subsection{$213$-avoiding}
\begin{lem}\label{ee}
For any positive integer $n \geq 1$, a $213$-avoiding flattened partition over $[n]$ has the integer $n$ at the end of its first run.
\end{lem}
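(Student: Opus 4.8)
The plan is to read off the structure of the first run and then force a forbidden $213$ to appear unless $n$ sits at its top. First I would write the first run as $\sigma(1)\sigma(2)\cdots\sigma(k)$, so that $1=\sigma(1)<\sigma(2)<\cdots<\sigma(k)$, with position $k$ being a descent (or else $k=n$). If $k=n$, the whole word is a single increasing run; being a permutation of $[n]$ it must be the identity $12\cdots n$, so $n=\sigma(n)$ lies at the end of the (only) run and the statement holds trivially. This disposes of the degenerate case, including $n=1$.

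Next I would assume $k<n$ and argue by contradiction, supposing $\sigma(k)\neq n$. Since the first run is increasing and $n$ is the largest letter of $[n]$, the value $n$ cannot appear inside the first run (if it did, it would have to be the last entry, forcing $\sigma(k)=n$); hence $n=\sigma(m)$ for some position $m>k$. Because position $k$ is a descent, we have $\sigma(k+1)<\sigma(k)<n$, which in particular shows $\sigma(k+1)\neq n$ and therefore $m\geq k+2$. Now the three entries $\sigma(k)$, $\sigma(k+1)$, and $\sigma(m)=n$ occupy the strictly increasing positions $k<k+1<m$ and satisfy $\sigma(k+1)<\sigma(k)<\sigma(m)$, so as a subsequence they are order-isomorphic to $213$.

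This contradicts the hypothesis that $\sigma$ is $213$-avoiding, and hence $\sigma(k)=n$, as claimed. The argument is almost entirely structural: the two points that need care are the single-run case handled above and the verification that the three chosen positions are genuinely distinct, i.e.\ that $n$ cannot occupy the slot immediately following the first run. I expect this distinctness check to be the only real subtlety, and it is settled precisely by the descent $\sigma(k)>\sigma(k+1)$, since this inequality already guarantees $\sigma(k+1)\neq n$; once it is in place, the forbidden pattern appears automatically.
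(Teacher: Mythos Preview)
Your proof is correct and follows essentially the same approach as the paper: both arguments suppose $n$ lies outside the first run, take the descent at the boundary between the first and second runs, and exhibit the forbidden $213$ pattern formed by the last entry of the first run, the first entry of the second run, and $n$. You are simply more explicit than the paper about the single-run case and about verifying that the three positions are genuinely distinct.
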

\begin{proof}
Let $\sigma \in \mathcal{F}(n; 213)$. Suppose $n$ is not in the first run of $\sigma$. Then $n$ would be the last element in of any of the remaining runs of $\sigma$. For some $j > 2$, let $\sigma(j)$ be the starting point of the second run. Then we would have a $213$-occurrence $\sigma(j-1)\sigma(j)n$, a contradiction.
\end{proof}
\begin{pro}\label{viot}
Let $p_{1}, p_{2}, \ldots, p_{r}, q_{r-1}, \ldots, q_{1}$ be non-empty words such that the \\concatenation \begin{equation}\label{happy}
p_{1}p_{2}\cdots p_{r}q_{r-1}q_{r-2}\cdots q_{1} = 123\cdots n.
\end{equation}
Then $\pi = p_{1}q_{1}|p_{2}q_{2}| \cdots |p_{r-1}q_{r-1}|p_{r}$ is an element in $\mathcal{F}(n; 213)$ and all elements in $\mathcal{F}(n; 213)$ are of this form. 
\end{pro}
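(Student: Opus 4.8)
The plan is to prove the two assertions separately: first that every $\pi$ built as described lies in $\mathcal{F}(n;213)$, and then that every element of $\mathcal{F}(n;213)$ decomposes in this way. For the forward direction I would first check that $\pi$ is a genuine flattened partition with exactly the $r$ indicated runs. Since $p_1p_2\cdots p_r q_{r-1}\cdots q_1 = 12\cdots n$, the blocks occupy disjoint intervals of values with $p_1<p_2<\cdots<p_r<q_{r-1}<\cdots<q_1$; hence inside each block $p_iq_i$ the values increase (so it is an ascending run, as $\max p_i<\min q_i$), while at every junction the top of $q_i$ (a large value) drops to the start $\min p_{i+1}$ (a small value), producing a true descent. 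The starting points are $\min p_1<\min p_2<\cdots$ and $\min p_1=1$, so $\pi$ is a flattened partition.

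The only real content in the forward direction is $213$-avoidance, which I would obtain by localizing the inversions. Because the blocks are value-ordered intervals and $p$-blocks carry the smallest values, the first coordinate of any inversion must lie in some $q_i$. Suppose $\pi(a)\pi(b)\pi(c)$ were a $213$-occurrence with $\pi(a)\in q_i$. Completing it needs $c>b$ with $\pi(c)>\pi(a)$; but every value exceeding $\pi(a)$ is either later inside the same block $q_i$ or lies in one of $q_1,\dots,q_{i-1}$, and these higher blocks all occur positionally before $q_i$. Thus $c$ must lie in $q_i$ after $a$, and then $b$ also lies in the ascending block $q_i$ with $a<b$, forcing $\pi(b)>\pi(a)$ and contradicting $\pi(b)<\pi(a)$. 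Hence no $213$ occurs.

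For the converse I would start from the reformulation that $213$-avoidance means: for every inversion $(a,b)$ one has $\sigma(a)>\sigma(c)$ for all $c>b$. Applied to the forced descent ending each run, this shows $\mathrm{last}(R_i)$ exceeds every entry of $R_{i+1}\cup\cdots\cup R_r$; hence the run maxima strictly decrease and $\mathrm{last}(R_1)=n$ (recovering Lemma \ref{ee}). I then set $m=\max R_r$, call the values $\le m$ \emph{small} and those $>m$ \emph{large}, and split each ascending run at this threshold into a prefix $p_i$ of smalls followed by a suffix $q_i$ of larges; since $R_r$ contains no large value, $R_r=p_r$. Using the increasing start points one checks that every run begins with a small value (so each $p_i\neq\varnothing$), while $\mathrm{last}(R_i)>m$ for $i<r$ gives $q_i\neq\varnothing$ for $i\le r-1$.

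The main obstacle is verifying the two ordering statements that force the concatenation to equal $12\cdots n$, and each is handled by exhibiting a forbidden $213$ should it fail. If some early run $R_i$ contained a small value exceeding $s_{i+1}$, that value together with $s_{i+1}$ and the element $m\in R_r$ would be a $213$ (after treating the degenerate case $|R_r|=1$ separately); so the small parts increase block-by-block and $p_1\cdots p_r=1\,2\cdots m$. Symmetrically, if $\max(q_{i+1})=\mathrm{last}(R_{i+1})$ exceeded $\min q_i$, then $\bigl(\min q_i,\ s_{i+1},\ \mathrm{last}(R_{i+1})\bigr)$ would be a $213$, since $s_{i+1}\le m<\min q_i$; so the large blocks strictly decrease and $q_{r-1}\cdots q_1=(m+1)\cdots n$. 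Combining, $p_1\cdots p_r q_{r-1}\cdots q_1=12\cdots n$ while $p_iq_i=R_i$ and $p_r=R_r$, so $\pi$ reconstructs $\sigma$ and the characterization is complete.
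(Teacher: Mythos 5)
Your proof is correct, and while your forward direction is essentially the paper's argument (localize a hypothetical $213$ occurrence relative to the value-ordered blocks and derive a contradiction), your converse takes a genuinely different route. The paper proceeds by induction on the runs: it analyzes only the first run, writing it as maximal blocks of consecutive values and showing that three or more such blocks would force a $213$ occurrence $k'(k+1)(k'+1)$, so that $R_{1} = c_{1}c_{2}$ with $c_{1} = 12\cdots k$ and $c_{2} = m(m+1)\cdots n$; it then observes that the remaining runs, standardized, form a shorter $213$-avoiding flattened partition and invokes the inductive hypothesis. You instead give a one-shot global decomposition: you reformulate $213$-avoidance as ``for every inversion $(a,b)$, every entry after position $b$ is smaller than $\sigma(a)$,'' use this to get strictly decreasing run maxima (re-deriving Lemma \ref{ee} rather than citing it), cut every run simultaneously at the threshold $m = \max R_{r}$, and prove the two block-ordering statements by exhibiting explicit forbidden $213$ patterns, taking care of nonemptiness of the blocks and the degenerate singleton-$R_{r}$ case along the way. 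The paper's induction buys brevity --- only one run is ever analyzed --- and sets up the recursive viewpoint that feeds directly into the enumeration in Theorem \ref{learn}; your global argument buys self-containedness (no appeal to Lemma \ref{ee}, nor to the claim that the standardized remainder is again flattened and avoiding) and exhibits the whole interleaved structure of the $p_{i}$ and $q_{i}$ at once, at the cost of a somewhat longer case analysis.
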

\begin{proof}
It is obvious from (\ref{happy}) that $p_{1}q_{1}, p_{2}q_{2}, \ldots, p_{r-1}q_{r-1}, p_{r}$ are runs. The permutation $\pi$ is flattened because the starting points of the runs from Equation(\ref{happy}) appear in increasing order. If we had a $213$-occurrence, then there would exist integers $m < j < k$ such that $\pi(j) < \pi(m) < \pi(k)$. Then $\pi(m)$ and $\pi(j)$ can not be in the same run. Let $\pi(m) \in p_{i}q_{i}$ and $\pi(j) \in p_{i+s}q_{i+s}$. Then $\pi(m)$ belongs to $q_{i}$ (because by Equation(\ref{happy}) if it belonged to $p_{i}$, we would have an increasing sequence). $\pi(k)$ can belong to $p_{i+t}$ or $q_{i+t}$ for some $t \geq s$. Using Equation(\ref{happy}), we would have $\pi(k)$ appearing to the left of $\pi(m)$ in the identity permutation i.e., $\pi(k) < \pi(m)$, which is against the assumptions. Thus indeed $\pi \in \mathcal{F}(n; 213)$.\\ Next we prove that all $\mathcal{F}(n; 213)$ have the form $p_{1}q_{1}p_{2}q_{2}\cdots p_{r-1}q_{r-1}p_{r}$ where the sub-words $p_{i}$ and $q_{i}$ are non-empty and have consecutive elements and satisfy Equation(\ref{happy}). Suppose $\sigma \in \mathcal{F}(n; 213)$ and consider its first run $R_{1}$. From Lemma \ref{ee}, $R_{1}$ ends with the integer $n$. Since $\sigma$ is flattened, then it is obvious that $R_{1}$ starts with $1$. If $\sigma$ is the identity permutation, $\mathtt{id}$, then $\mathtt{id} = p_{1}$. If $\sigma \not= \mathtt{id}$, let $R_{1} = c_{1}c_{2}\cdots c_{l}$ where $l \geq 2$ and each $c_{i}$ consists of consecutive numbers which are grouped in non-empty maximal words of $R_{1}$. In particular, $c_{1} = 123\cdots k$, and $c_{l} = m(m+1)\cdots n$ for some $1 \leq k < m \leq n$. We note that $k+1$ is the first element of $R_{2}$ (the second run of $\sigma$). Now suppose $l \geq 3$ and let $k'$ be the last element of $c_{2}$. Then $k'+1$ is not in $R_{1}$. Thus we have $$\sigma = 12\cdots k\cdots k'\cdots n|(k+1)\cdots (k'+1),$$ where $k+1 < k'$. Then $k'(k+1)(k'+1)$ would be a $213$-occurrence, a contradiction. Hence $l = 2$ and $R_{1} = c_{1}c_{2}$ where $c_{1} = 12\cdots k$ and $c_{2} = m(m+1)\cdots n$. Let the remaining runs be denoted as $R_{2}R_{3}\cdots R_{r} = \sigma'$. Then $\sigma'$ is a permutation of the elements $\{k+1, \ldots, m-1\}$. We note that the permutation $\sigma'-k$ is $213$-avoiding and flattened. By an inductive argument, the claim is indeed true.
\end{proof}
\begin{thm}\label{learn}
For any integer $n \geq 2$, we have \begin{equation*}
\sum_{\pi \in \mathcal{F}(n; 213)} q^{\mathtt{runs}(\pi)} = \sum_{r \geq 1}q^{r}\binom{n-1}{2r - 2}. \end{equation*} Consequently, $|\mathcal{F}(n; 213)| = 2^{n-2}$ with $|\mathcal{F}(1; 213)| = 1$.
\end{thm}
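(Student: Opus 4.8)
The plan is to turn the structural description of Proposition \ref{viot} into a direct enumeration refined by the number of runs, and then to recover the cardinality by specializing $q=1$. First I would note that in the representation $\pi = p_{1}q_{1}|p_{2}q_{2}|\cdots|p_{r-1}q_{r-1}|p_{r}$ the permutation has exactly $r$ runs, namely $p_{1}q_{1}, \ldots, p_{r-1}q_{r-1}$ together with the final run $p_{r}$. Hence to evaluate $\sum_{\pi}q^{\mathtt{runs}(\pi)}$ it suffices, for each fixed $r\geq 1$, to count the elements of $\mathcal{F}(n;213)$ having exactly $r$ runs.

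Next, by Proposition \ref{viot} such a $\pi$ is completely determined by a factorization of the identity word $123\cdots n$ into the $2r-1$ non-empty consecutive blocks $p_{1},p_{2},\ldots,p_{r},q_{r-1},\ldots,q_{1}$ (read in that order), that is, by a composition of $n$ into $2r-1$ positive parts. I would therefore exhibit the correspondence between $\{\pi\in\mathcal{F}(n;213):\mathtt{runs}(\pi)=r\}$ and such compositions. Since a composition of $n$ into $2r-1$ positive parts is the same as a placement of $2r-2$ bars among the $n-1$ interior gaps of $123\cdots n$, there are $\binom{n-1}{2r-2}$ of them. Summing over $r$ then yields
\[
\sum_{\pi\in\mathcal{F}(n;213)}q^{\mathtt{runs}(\pi)}=\sum_{r\geq 1}q^{r}\binom{n-1}{2r-2}.
\]

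The one point needing genuine care is that this correspondence is a \emph{bijection}: distinct compositions must give distinct permutations, and each $\pi$ must arise from a unique composition. Surjectivity onto $\mathcal{F}(n;213)$ is precisely the second half of Proposition \ref{viot}. For injectivity I would argue that within each run $p_{i}q_{i}$ the block $p_{i}$ is recoverable as the maximal initial segment of consecutive integers of that run: the entries of $q_{i}$ lie among the largest values (they come after all the $p$-blocks in $123\cdots n$), so the last entry of $p_{i}$ and the first entry of $q_{i}$ differ by at least two, forcing a gap. This recovers every $p_{i}$ and $q_{i}$, hence the composition, uniquely. I expect this verification of unique factorization to be the only real obstacle; the remainder is bookkeeping.

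Finally, specializing $q=1$ gives
\[
|\mathcal{F}(n;213)|=\sum_{r\geq 1}\binom{n-1}{2r-2}=\sum_{k\geq 0}\binom{n-1}{2k},
\]
the sum of the even-indexed binomial coefficients in row $n-1$, which equals $2^{n-2}$ for $n\geq 2$. The case $n=1$, where $\pi=1$ is the unique flattened partition (one run), gives $|\mathcal{F}(1;213)|=1$ directly, completing the statement.
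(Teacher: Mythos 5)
Your proof is correct and follows essentially the same route as the paper: both invoke Proposition \ref{viot} to identify $\pi \in \mathcal{F}(n;213)$ with $r$ runs with a choice of $2r-2$ demarcations among the $n-1$ gaps of $12\cdots n$, giving $\binom{n-1}{2r-2}$, and then sum over $r$ and set $q=1$. Your explicit verification of injectivity (recovering each $p_i$ as the maximal initial consecutive segment of its run) is a welcome tightening of a step the paper passes over with the phrase ``constructed uniquely,'' but it is the same argument.
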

\begin{proof}
Let us construct a flattened partition $\pi$ over $[n]$ having $r$ runs, and with $2r-1$ sub-words $p_{1}, \ldots, p_{r}, q_{r-1}, \ldots, q_{1}$ such that $p_{1}p_{2}\cdots p_{r}q_{r-1}\cdots q_{1} = 123\cdots n$, as in Equation(\ref{happy}). Then $\pi$ is constructed uniquely from $123\cdots n$ by choosing $2r-2$ spaces from the $n-1$ spaces between the numbers, and then use them as demarcations between the $2r-1$ sub-words. There are $\binom{n-1}{2r-2}$ such choices. Summing over $r \geq 1$ gives the desired result.
\end{proof}
\begin{exa}
Let us construct a flattened partition $\pi \in \mathcal{F}(9; 213)$ having $3$ runs. Consider the sequence $1\llcorner\!\lrcorner2\llcorner\!\lrcorner3\llcorner\!\lrcorner4\llcorner\!\lrcorner5\llcorner\!\lrcorner6\llcorner\!\lrcorner7\llcorner\!\lrcorner8\llcorner\!\lrcorner9$, which has $8$ spaces. Then $\pi$ is determined from this sequence by choosing $4$ of them as demarcations. We may for instance choose $12\llcorner\!\lrcorner3\llcorner\!\lrcorner45\llcorner\!\lrcorner67\llcorner\!\lrcorner 89$. Since there are three runs, we then label the first three blocks as $p_{1} = 12$, $p_{2} = 3$, and $p_{3} = 45$. Then the remaining blocks are $q_{2} = 6$ and $q_{1} = 78$. Thus we have $\pi = p_{1}q_{1}p_{2}q_{2}p_{3} = 12783645 \in \mathcal{F}(9, 213)$.
\end{exa}
\begin{thm}\label{fanja}
For any integer $n \geq 3$, \begin{equation*}
\sum_{\pi \in \mathcal{F}(n; 312)}{x_{1}}^{\alpha_{1}(\pi)}\cdot {x_{2}}^{\alpha_{2}(\pi)}\cdots {x_{r}}^{\alpha_{r}(\pi)} = \sum_{\pi' \in \mathcal{F}(n; 213)}{x_{1}}^{\alpha_{1}(\pi')}\cdot {x_{2}}^{\alpha_{2}(\pi')}\cdots {x_{r}}^{\alpha_{r}(\pi')},
\end{equation*} where $\alpha_{i}(\pi)$ is the length of the $i^{th}$ run of $\pi$. Consequently, putting $x_{i} = 1$, we have $|\mathcal{F}(n; 213)| = |\mathcal{F}(n; 312)|$.
\end{thm}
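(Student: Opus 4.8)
The plan is to prove the stronger, refined statement that for every composition $(\alpha_1,\ldots,\alpha_r)$ of $n$ the number of permutations in $\mathcal{F}(n;312)$ whose ordered sequence of run lengths is exactly $(\alpha_1,\ldots,\alpha_r)$ equals the corresponding number for $\mathcal{F}(n;213)$; summing the monomials $x_1^{\alpha_1}\cdots x_r^{\alpha_r}$ over all compositions then yields the displayed identity, and setting every $x_i=1$ gives $|\mathcal{F}(n;213)|=|\mathcal{F}(n;312)|$. For the right-hand side this count is already visible from Proposition \ref{viot}: a run $p_iq_i$ of prescribed length $\alpha_i$ (for $i<r$) is obtained by choosing $|p_i|\in\{1,\ldots,\alpha_i-1\}$ with $|q_i|=\alpha_i-|p_i|$, while the final run $p_r$ is forced, so the number of elements of $\mathcal{F}(n;213)$ with run lengths $(\alpha_1,\ldots,\alpha_r)$ is $\prod_{i=1}^{r-1}(\alpha_i-1)$. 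It therefore suffices to prove the same product formula for $\mathcal{F}(n;312)$.

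The key step is a structural description of the first run of a $312$-avoiding flattened partition, playing the role that Lemma \ref{ee} plays for $213$. Let $\sigma\in\mathcal{F}(n;312)$ have at least two runs, write $e_1$ for the last (largest) entry of its first run $R_1$, and let $s_2$ be the starting point of the second run, so $1<s_2<e_1$. First, because $\sigma$ is flattened, every run other than $R_1$ has minimum equal to its start, which is at least $s_2$; hence every value smaller than $s_2$ already lies in $R_1$. Second, $312$-avoidance forces every value in the open interval $(s_2,e_1)$ to occur before $s_2$ in position, since otherwise such a value $v$ would complete an occurrence $e_1\,s_2\,v$ of $312$, and so these values also lie in $R_1$. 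Consequently $R_1=\{1,2,\ldots,e_1\}\setminus\{s_2\}$ written increasingly, so that $\alpha_1=e_1-1$ and, for a fixed $\alpha_1$, the second-run start $s_2$ may be any of the $\alpha_1-1$ integers in $\{2,\ldots,e_1-1\}$.

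Next I would set up the recursion. Deleting $R_1$ leaves the value set $\{s_2\}\cup\{e_1+1,\ldots,n\}$; relabelling it order-isomorphically onto $[n-\alpha_1]$ produces a permutation $\sigma''$, and I would check that $\sigma\mapsto\sigma''$ is a bijection from the $312$-avoiding flattened partitions with first-run data $(e_1,s_2)$ onto $\mathcal{F}(n-\alpha_1;312)$, sending run lengths $(\alpha_1,\alpha_2,\ldots,\alpha_r)$ to $(\alpha_2,\ldots,\alpha_r)$. The forward direction is immediate; the substantive point is that the inverse, which prepends $R_1=12\cdots(s_2-1)(s_2+1)\cdots e_1$ to a relabelled smaller partition, again avoids $312$. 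This is where I expect the main difficulty to lie, and it hinges on the observation that among the restored values exactly one, namely $s_2$, is smaller than $e_1$; thus no entry of $R_1$ can serve as the ``$3$'' of a $312$ pattern, as that would require two later values below it, while any occurrence lying entirely in the appended part is excluded by the inductive hypothesis. Granting this, induction on $n$ gives that the number of elements of $\mathcal{F}(n;312)$ with run lengths $(\alpha_1,\ldots,\alpha_r)$ is $(\alpha_1-1)\prod_{i=2}^{r-1}(\alpha_i-1)=\prod_{i=1}^{r-1}(\alpha_i-1)$, matching $\mathcal{F}(n;213)$ and completing the proof; composing this recursion with the one read off from Proposition \ref{viot} yields the explicit run-length-preserving bijection that the statement advertises.
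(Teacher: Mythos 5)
Your proposal is correct, and it takes a genuinely different (though closely related) route from the paper. The paper proves the theorem by constructing an explicit recursive bijection $f:\mathcal{F}(n;312)\to\mathcal{F}(n;213)$: inside its proof it establishes that the first run of a $312$-avoiding flattened partition is $12\cdots k\,(k+2)(k+3)\cdots t$ --- precisely your characterization $R_1=\{1,\ldots,e_1\}\setminus\{s_2\}$ with $s_2=k+1$ and $e_1=t$ --- then keeps $p_1$ fixed, shifts $q_1$ up by $n-t$ so that the image first run ends at $n$ as Proposition \ref{viot} requires, and recurses on the standardized remainder. You use the same two structural ingredients (Proposition \ref{viot} on the $213$ side, and the first-run structure on the $312$ side, which you derive more directly from flattenedness together with the forced occurrence $e_1\,s_2\,v$), but instead of building a cross-bijection you count: both sets contain exactly $\prod_{i=1}^{r-1}(\alpha_i-1)$ elements with run-length composition $(\alpha_1,\ldots,\alpha_r)$, which is equivalent to the displayed identity since permutations contribute equal monomials exactly when their run-length compositions agree. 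Each approach buys something: yours yields a stronger explicit refinement (the product formula, which also recovers the coefficient $\binom{n-1}{2r-2}$ of Theorem \ref{learn} upon summing over compositions with $r$ parts), while the paper's yields a concrete map $f$ that it reuses afterwards (Proposition \ref{vvv} is stated in terms of $f$). Your verification that prepending $R_1$ to a relabelled smaller partition preserves $312$-avoidance --- among the later values only $s_2$ lies below $e_1$, so no entry of $R_1$ can serve as the ``$3$'' of an occurrence --- is exactly the invertibility point the paper passes over quickly, so on that step your write-up is, if anything, the more complete of the two.
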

\begin{rem}
Note that in each $\pi$ or $\pi'$, the number of factors corresponds to the number of runs $r$ in $\pi$.
\end{rem}
\begin{proof}
We shall define a mapping $f(\pi) = \pi' \in \mathcal{F}(n; 213)$ which associates to each first run $R_{1}$ of $\pi \in \mathcal{F}(n; 312)$ a corresponding first run $R_{1}'$  of $\pi'$ of the same length as described below. We shall also provide an inverse $g$ to $f$. If $\pi$ is the identity permutation, then $f(\pi) = \pi = \pi'$, else by the same arguments as in Proposition \ref{viot}, we have that $R_{1}$ consists of two non-empty sub-words: $p_{1} = 123\cdots k$ and $q_{1} = m(m+1)\cdots t$, with one gap between them, for some $t \leq n$ and $k < m$. Hence $m \geq k+2$. Suppose $ m > k+2$, then $k+1$ would be the starting point of the second run and $k+2$ would be anywhere on the right of $k+1$ in $\pi$. Hence we would have a $312$ occurrence $m(k+1)(k+2)$, a contradiction. Hence $m = k+2$. By Proposition \ref{viot}, $R_{1}'$ should consist of two non-empty sub-words $p_{1}'$ and $q_{1}'$ of consecutive elements. We put $p_{1}' \colon= p_{1}$, and the sub-word $q_{1}'$ of $\pi'$ is got by adding a term $(n-t)$ to each element of $q_{1}$ i.e.,\\ $q_{1}' = q_{1} + (n-t)= (m+n-t)(m+n+1-t)\cdots n$. Let $\sigma$ be the resulting sub-word obtained from $\pi$ after removing $R_{1}$ and then writing the remaining elements of $\pi$ in standard form. Applying the mapping $f$ on $\sigma$ we obtain $\sigma'$, for which adding $k$ to each element of its elements gives $\pi' = R_{1}'(\sigma' + k)$ which indeed is $213$-avoiding.

The inverse mapping $g$ could be constructed recursively in an analogous manner as for $f$. Note that $p_{1} = p_{1}'$ and that $q_{1}$ and $q_{1}'$ have the same length. It suffices to note that $R_{1}$ and $R_{1}'$ have the same lengths and hence the mapping $f$ preserves the lengths and the number of the runs in $\pi$ and $\pi'$ and is a bijection.
\end{proof}
\begin{exa}
Consider $\pi = 13|246|5 \in \mathcal{F}(6; 312)$. Applying the mapping $f$ on the first run $R_{1} = 13$ gives $R_{1}' = 16$ with $k = 1$. The resulting sub-word $246|5$ when standardized gives $\sigma = 124|3 \in \mathcal{F}(4; 312)$. Again applying $f$ on $124|3$ gives $\sigma' = 124|3$ and $\sigma'+1 = 235|4$. Thus $\pi' = R_{1}'(\sigma'+1) = 16|235|4$.
\end{exa}
\begin{pro}\label{vvv}
A $\pi \in \mathcal{F}(n; 213)$ contains at least one $312$ occurrence if and only if $f(\pi) \in \mathcal{F}(n; 312)$ contains at least one $213$ occurrence.
\end{pro}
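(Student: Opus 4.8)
The plan is to prove the statement by induction on $n$, using the recursive construction of the bijection $f$ given in the proof of Theorem \ref{fanja}. Because $f$ is a bijection from $\mathcal{F}(n;312)$ onto $\mathcal{F}(n;213)$, the claim is equivalent to the symmetric assertion that, for every $\pi \in \mathcal{F}(n;312)$ with image $\pi' = f(\pi) \in \mathcal{F}(n;213)$, the partition $\pi$ contains a $213$-occurrence if and only if $\pi'$ contains a $312$-occurrence; I shall establish this form. For $n \le 2$ (and, more generally, for the identity) there is nothing to prove, since the identity is increasing, contains neither pattern, and satisfies $f(\mathrm{id}) = \mathrm{id}$.

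For a non-identity $\pi$, I would write its first run as $R_1 = p_1 q_1$ with $p_1 = 1\cdots k$ and $q_1 = (k+2)\cdots t$, exactly as in Theorem \ref{fanja}; recall that $312$-avoidance forces the gap to be minimal ($m = k+2$), so that $k+1$ is the starting point of the second run and is the unique tail entry smaller than $t$. Let $\sigma \in \mathcal{F}(n-t+1;312)$ be the standardisation of $\pi$ after deleting $R_1$, so that $\pi' = R_1'(\sigma'+k)$ with $R_1' = p_1'q_1'$, $q_1' = q_1 + (n-t)$ ending at $n$, and $\sigma' = f(\sigma)$. The crux is a local analysis of how a forbidden pattern can straddle the first run.

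I would first classify the $213$-occurrences of $\pi$. As $R_1$ is increasing, no two of the three chosen positions may lie in $R_1$, nor can the occurrence be contained in $R_1$; hence either all three positions lie in the deleted tail, or exactly one lies in $R_1$ and must be the earliest of the three, carrying the middle value of the pattern. In the latter situation that middle value is an entry of $q_1$, the least value is the tail minimum $k+1$ (which occupies the first tail position), and the greatest value is a tail entry exceeding $t$; such an entry exists precisely when $t < n$. Thus $\pi$ contains a $213$-occurrence if and only if $t < n$ or $\sigma$ contains a $213$-occurrence. The same bookkeeping applied to $\pi'$ — where $R_1'$ is again increasing, the block $q_1'$ has been pushed up to end at $n$, and $k+1$ is still the first tail entry — shows that $\pi'$ contains a $312$-occurrence if and only if $t < n$ or $\sigma'$ contains a $312$-occurrence. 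Since $t \ge k+2 \ge 3$ gives $n-t+1 < n$, the inductive hypothesis applies to the pair $(\sigma, f(\sigma))$ and makes the two right-hand sides coincide, closing the induction.

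The main obstacle is the local classification itself: one must verify carefully that every occurrence meeting the first run is forced into the single configuration described above, and, crucially, that on both sides its existence is controlled by the very same inequality $t < n$. This is exactly where the rigidity of $312$-avoiding first runs ($m = k+2$, so that $k+1$ is the unique small tail entry) and the precise ``lift $q_1$ to the top'' action of $f$ come into play, the two features conspiring to make the ``tail has at least two entries'' condition identical for the two patterns. I would also check the degenerate boundary $t = n$ separately, where the tail reduces to the single entry $k+1$ and neither pattern can meet the first run, to confirm the equivalence there.
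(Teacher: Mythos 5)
Your proposal is correct, but it takes a genuinely different route from the paper's own proof. The paper argues globally: every element of the doubly-avoiding set $\mathcal{F}(n;213,312)$ is a fixed point of $f$, and since $f$ is a bijection between $\mathcal{F}(n;312)$ and $\mathcal{F}(n;213)$ (Theorem \ref{fanja}), it must carry $B=\mathcal{F}(n;312)\setminus\mathcal{F}(n;213,312)$ onto $A=\mathcal{F}(n;213)\setminus\mathcal{F}(n;213,312)$ --- and these complements consist exactly of the partitions containing an occurrence of the other pattern. You instead unwind the recursive definition of $f$ and induct on $n$, classifying the occurrences that straddle the first run; I checked this classification and it is sound. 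In $\pi\in\mathcal{F}(n;312)$, a $213$-occurrence meeting $R_1$ is indeed forced to have its middle value in $q_1$, its smallest value equal to $k+1$ (the first tail entry), and its largest value in $\{t+1,\dots,n\}$, which is nonempty iff $t<n$; in $\pi'$, a $312$-occurrence meeting $R_1'$ is forced to have its largest value in $q_1'$ and its two smaller values forming an ascending pair in the tail, which exists iff the tail has length at least $2$, i.e.\ iff $t<n$, since any flattened partition of length at least $2$ begins with an ascent. The trade-off is this: the paper's proof is a few lines, though as literally written it concludes from the equality $|A|=|B|$, whereas the pointwise ``if and only if'' really needs that $f$ maps $B$ into $A$ (this does follow from injectivity of $f$ together with the fixed-point observation, so the gap is cosmetic); your element-level induction needs no such repair and yields finer information, namely that a partition and its image contain a forbidden occurrence exactly when some first run in the recursive decomposition fails to end at the current maximum, which also recovers the fixed-point description of $\mathcal{F}(n;213,312)$ and is consistent with Lemma \ref{ee}. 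Finally, your opening reformulation in terms of $f^{-1}$ is the right way to resolve the paper's ambiguity, since the proposition applies $f$ to elements of $\mathcal{F}(n;213)$ while Theorem \ref{fanja} defines $f$ on $\mathcal{F}(n;312)$.
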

\begin{proof}
Let $\mathcal{F}(n; 213, 312)$ be the set consisting of all $\sigma$ which do not contain any $312$ and $213$ occurrences. Then under the mapping $f$, we have that $f(\sigma) = \sigma$. From Theorem \ref{fanja}, the sets $\mathcal{F}(n; 213)$ and $\mathcal{F}(n; 312)$ have the same sizes. Thus the sizes of the sets $A = \mathcal{F}(n; 213)\setminus \mathcal{F}(n; 213, 312)$ and $B = \mathcal{F}(n; 312)\setminus \mathcal{F}(n; 213, 312)$ are also the same. This proves the claim.
\end{proof}
\subsection{$312$-avoiding}\label{ffb}
\begin{pro}\label{qh}
A $312$-avoiding flattened partition starts with  either $12$ or $13$.
\end{pro}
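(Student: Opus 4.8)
The plan is to use the fact, noted in the preliminaries, that every flattened partition $\sigma$ satisfies $\sigma(1) = 1$, so the whole statement reduces to pinning down $\sigma(2)$. Since the first run is increasing, $\sigma(2) > \sigma(1) = 1$, which already gives $\sigma(2) \geq 2$. It therefore suffices to prove the upper bound $\sigma(2) \leq 3$, and I would do this by contradiction: assume $\sigma(2) = a \geq 4$ for some $\sigma \in \mathcal{F}(n; 312)$ and exhibit an explicit $312$-occurrence.

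The key step is to track the positions of the two smallest available values, namely $2$ and $3$, both of which must lie strictly to the right of position $2$ because $\sigma(1) = 1$ and $\sigma(2) = a \geq 4$. First I would locate the value $2$. As each run is increasing, its starting point is its minimum, so a run containing $2$ must start at a value $\leq 2$, leaving only the candidates $1$ and $2$. The starting point cannot be $1$, since the first run reads $1\,a\cdots$ with $a \geq 4$ and an increasing run cannot contain $2$ after $a$. Hence $2$ is itself a starting point, and since starting points increase from $s_1 = 1$, it is the start of the second run. Applying the same reasoning to $3$: a run containing $3$ starts at $1$, $2$, or $3$; the value $1$ is ruled out exactly as before, so either $3$ lies in the second run (necessarily after its starting element $2$) or $3$ opens a run that, because $3 > 2$, comes after the run starting at $2$. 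In every case $3$ appears to the right of $2$.

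With this ordering established, the subsequence formed by $\sigma(2) = a$, then the value $2$, then the value $3$ occupies strictly increasing positions and satisfies $a > 3 > 2$, so it is order-isomorphic to $312$. This contradicts $\sigma \in \mathcal{F}(n; 312)$ and forces $\sigma(2) \leq 3$; combined with $\sigma(2) \geq 2$ this yields $\sigma(2) \in \{2,3\}$, i.e.\ $\sigma$ starts with $12$ or $13$.

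I expect the main obstacle to be the bookkeeping in the second step, namely establishing rigorously that $2$ must precede $3$ regardless of whether $3$ falls inside the second run or opens a later run. This is precisely where the defining property of flattened partitions does the real work—the combination of increasing run starting points with each run being increasing—and some care is needed to exhaust the cases for the location of $3$ without appealing to any further structure.
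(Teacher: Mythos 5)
Your proof is correct and follows essentially the same route as the paper's: assume $\sigma$ starts with $1a$ for $a \geq 4$, observe that $2$ must then start the second run and $3$ must appear to its right, and conclude that $a\,2\,3$ is a forbidden $312$-occurrence. The only difference is that you spell out carefully why $2$ and $3$ are positioned as claimed, which the paper asserts without detail.
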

\begin{proof}
Let $\pi$ be a $312$-avoiding flattened partition. Suppose that $\pi$ starts with $1i$ where $i \geq 4$. Then $2$ is the starting point of the second run. The integer $3$ appears on the right of $2$. Hence $\pi$ would contain a $312$ occurrence $i23$. Hence $i \leq 3$.
\end{proof}

\begin{pro}\label{ww1}
Interchanging the $2$ and $3$ in a $312$-avoiding flattened partition preserves the avoidance property.	
\end{pro}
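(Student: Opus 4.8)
The plan is to use Proposition \ref{qh} to pin down exactly where the entries $2$ and $3$ can sit, and then to check separately that the swapped word $\pi'$ is still flattened and still $312$-avoiding. Write $p_2,p_3$ for the positions of $2$ and $3$ in $\pi$. Since $\pi(1)=1$ and, by Proposition \ref{qh}, $\pi(2)\in\{2,3\}$, and since the entries within a single run increase, only three configurations occur: (A1) both $2$ and $3$ lie in the first run, so $\pi=123\cdots$; (A2) $2$ is the second entry of the first run while $3$ begins the second run, so $\pi=12\cdots\mid 3\cdots$; and (B) $3$ is the second entry of the first run while $2$ begins the second run, so $\pi=13\cdots\mid 2\cdots$. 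That $3$ (resp.\ $2$) must \emph{start} the later run follows because any entry preceding it inside a run would be smaller, hence equal to $1$ or $2$, already placed.

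First I would check that $\pi'$ is flattened. The swap never moves the entry $1$, so $\pi'(1)=1$. In configuration (A2) the first run $1\,2\,x_1\cdots$ (with every $x_i\ge4$) becomes $1\,3\,x_1\cdots$, still increasing and of the same length, while the second run $3\cdots$ becomes $2\cdots$; every later run contains neither $2$ nor $3$ and is untouched. The sequence of starting points therefore changes only from $1<3<s_3<\cdots$ to $1<2<s_3<\cdots$, still increasing because each later start exceeds $3$. Configuration (B) is handled symmetrically, and in (A1) the first run $1\,2\,3\,x_1\cdots$ becomes $1\,3\,2\,x_1\cdots$, which splits as $1\,3\mid 2\,x_1\cdots$; the new starts $1<2$ precede all old later starts (each $\ge4$), so $\pi'$ is again flattened. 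Applying the swap twice restores $\pi$, so the map is an involution, in particular a bijection.

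The substantive part is showing $\pi'$ avoids $312$. The key observation is that, because $\pi(1)=1$ sits at the very front of the word, neither $2$ nor $3$ can play the role of the leading (largest) entry of a $312$ occurrence: a leading entry of value $2$ admits no two strictly smaller entries, while a leading entry of value $3$ would force its smallest companion, the entry $1$, to lie to its right, which never happens. Consequently, in any hypothetical $312$ occurrence of $\pi'$ at positions $i<j<k$ with $\pi'(j)<\pi'(k)<\pi'(i)$, the leading value satisfies $\pi'(i)\ge4$, so $i\notin\{p_2,p_3\}$ and $\pi'(i)=\pi(i)$. If neither $j$ nor $k$ meets $\{p_2,p_3\}$ the same three entries already form a $312$ in $\pi$, a contradiction; so I would split into the cases $j=p_3$ (small entry $=2$), $j=p_2$ (small entry $=3$), and $k=p_2$. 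In each case I would replace the moved entry by its original value and check that the resulting triple is either a genuine $312$ of $\pi$ or else forces $i<2$, hence $i=1$ and $\pi'(i)=1$, contradicting $\pi'(i)\ge4$.

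I expect the bookkeeping in this last step to be the main obstacle: for each choice of which of $j,k$ coincides with $p_2$ or $p_3$ one must track precisely which original value reoccupies that slot in $\pi$ and confirm that the induced triple is $312$-patterned rather than, say, $321$-patterned. The inequalities $\pi'(j)<\pi'(k)<\pi'(i)$ together with the fact that $2$ and $3$ are the two smallest non-initial values make this routine but delicate, and the position constraint that $1$ heads the word is exactly what rules out the one genuinely dangerous configuration, namely the triple $(\ge4,\,2,\,3)$ that arises when $3$ precedes $2$.
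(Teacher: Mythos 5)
Your proposal is correct, but there is nothing in the paper to compare it against: the paper states Proposition \ref{ww1} with no proof at all, and immediately uses it to justify Corollary \ref{ww} in a single sentence. So your argument is a genuine filling-in of a gap rather than an alternative to an existing proof. Your route is sound: Proposition \ref{qh} pins the entries $2$ and $3$ into the three configurations you list (in (A2) and (B) the later of the two values must \emph{start} the second run, exactly as you argue), the flattenedness check goes through in each configuration, and the avoidance argument is correctly organized around the observation that the leading entry of any $312$ occurrence in $\pi'$ must be at least $4$, hence occupies an unmoved position. The case analysis over which of $j,k$ equals $p_2$ or $p_3$ does close up as you predict: the cases $j=p_2$ and $j=p_3$ transfer back to a genuine $312$ in $\pi$ (the middle value $\pi(k)\ge 4$ stays above both $2$ and $3$), the case $k=p_2$ alone forces $\pi'(j)=1$, impossible since $1$ sits at position $1$, and the dangerous triple $(\ge 4,\,2,\,3)$ requires $3$ to precede $2$ in $\pi$, i.e.\ configuration (B), where $p_3=2$ forces the leader to position $1$ carrying the value $1$ --- a contradiction. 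One small imprecision: configuration (B) is not perfectly symmetric to (A2), because when the first run of $\pi$ is exactly $13$ the swap \emph{merges} the first two runs (this is the inverse of the splitting you describe in (A1)); the result $1\,2\,3\,y_1\cdots$ is still flattened, so nothing breaks, but ``handled symmetrically'' should be replaced by this explicit check. With that sentence added, your proof is complete and could stand as the paper's missing proof of this proposition.
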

\begin{cor}\label{ww}
For any integer $n \geq 3$, the number of $312$-avoiding flattened partitions over $[n]$ starting with $12$ is equal to the number of $312$-avoiding flattened partitions over $[n]$ starting with $13$.
\end{cor}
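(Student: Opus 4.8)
The plan is to realize the desired equality as a bijection induced by the single involution already introduced, namely the map $\phi$ that interchanges the values $2$ and $3$ in a permutation. First I would partition the set $\mathcal{F}(n; 312)$ according to its first two letters: by Proposition \ref{qh}, every $312$-avoiding flattened partition over $[n]$ begins with either $12$ or $13$, so writing $A$ for the subset starting with $12$ and $B$ for the subset starting with $13$, one obtains a disjoint decomposition $\mathcal{F}(n; 312) = A \sqcup B$. The corollary is then precisely the statement $|A| = |B|$.

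Next I would check that $\phi$ restricts to a well-defined self-map of $\mathcal{F}(n; 312)$. The value $1$ is never moved by $\phi$, so $\phi(\pi)(1) = 1$ is retained, and Proposition \ref{ww1} guarantees that $\phi(\pi)$ is again a $312$-avoiding flattened partition. Moreover $\phi$ is visibly an involution on all permutations of $[n]$ — applying it twice returns both $2$ and $3$ to their original slots — and hence is a bijection from $\mathcal{F}(n; 312)$ to itself.

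It then remains to verify that $\phi$ swaps the two blocks. Since $\pi(1) = 1$ and, by Proposition \ref{qh}, $\pi(2) \in \{2, 3\}$, the effect of $\phi$ is simply to toggle the second letter: if $\pi \in A$ then $\pi(2) = 2$, so $\phi(\pi)(1) = 1$ and $\phi(\pi)(2) = 3$, whence $\phi(\pi) \in B$; symmetrically $\phi(B) \subseteq A$. Because $\phi$ is an involution, the restrictions $\phi|_{A} \colon A \to B$ and $\phi|_{B} \colon B \to A$ are mutually inverse bijections, which gives $|A| = |B|$ and proves the corollary.

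The only point requiring care — and the step I would treat as the crux — is the claim that $\phi$ preserves membership in $\mathcal{F}(n; 312)$ and not merely the property of $312$-avoidance: one must be certain that interchanging $2$ and $3$ cannot disturb the increasing order of the run starting points. This is exactly what Proposition \ref{ww1} supplies, so I would invoke it directly; were it unavailable, I would instead argue it by hand, using the observation (from Proposition \ref{qh} together with the flattened structure) that in such a permutation the values $2$ and $3$ can occupy only a very restricted set of positions, namely the second slot of the first run or the starting slot of the second run.
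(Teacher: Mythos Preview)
Your proof is correct and follows exactly the same approach as the paper: both use the involution swapping the values $2$ and $3$ (Proposition~\ref{ww1}) to set up a bijection between the $12$-starting and $13$-starting $312$-avoiding flattened partitions. Your write-up is in fact more thorough than the paper's one-line justification, explicitly verifying that the involution restricts to $\mathcal{F}(n;312)$ and exchanges the two blocks.
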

This is because in each  $312$-avoiding flattened partitions over $[n]$ starting with $12$, interchanging $2$ and $3$ gives a $312$-avoiding flattened partitions over $[n]$ starting with $13$, and vice versa.
\begin{thm}\label{qa}
 For all $n \geq 1$, \begin{equation*}
 \sum_{\pi \in \mathcal{F}(n; 312)} q^{\mathtt{inv}(\pi)} = (1+q)^{n-2}.
 \end{equation*}
\end{thm}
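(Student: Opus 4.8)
The plan is to prove, by induction on $n$, the recurrence
\[ F_n(q) = (1+q)\,F_{n-1}(q), \qquad n \ge 3, \]
where $F_n(q) := \sum_{\pi \in \mathcal{F}(n;312)} q^{\mathtt{inv}(\pi)}$, together with the directly verified base values $F_1(q)=F_2(q)=1=(1+q)^0$. Since $(1+q)^{n-2} = (1+q)(1+q)^{n-3}$, iterating this recurrence down to $F_2$ yields the claim. The whole argument rests on the dichotomy of Proposition \ref{qh}: for $n\ge 3$ every $\pi\in\mathcal F(n;312)$ begins with either $12$ or $13$, so I would split $F_n(q) = A_n(q) + B_n(q)$, where $A_n$ (resp.\ $B_n$) collects the inversion generating function over those partitions starting with $12$ (resp.\ $13$), and then show $A_n(q) = F_{n-1}(q)$ and $B_n(q) = q\,F_{n-1}(q)$.

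For the first piece I would set up an inversion-preserving bijection between the $12$-starting members of $\mathcal{F}(n;312)$ and all of $\mathcal{F}(n-1;312)$. Given $\pi$ starting with $12$, delete the leading letter $1$ and subtract $1$ from every remaining letter; conversely, given $\sigma\in\mathcal F(n-1;312)$, form $1\,(\sigma+1)$ by prepending $1$ and adding $1$ to each letter of $\sigma$. One checks that the deletion keeps the first run increasing and merely lowers each run-starting value by $1$, so the flattened property is preserved, and (by relabelling, together with the fact that deleting a letter cannot create a pattern) so is the $312$-avoidance; the reverse map lands back among the $12$-starting partitions precisely because $\sigma$ begins with $1$. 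Crucially, the letter $1$ sits in the first position and is the global minimum, hence it takes part in no inversion, so both maps preserve $\mathtt{inv}$. This gives $A_n(q) = F_{n-1}(q)$.

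For the second piece I would invoke Corollary \ref{ww}: interchanging $2$ and $3$ is a bijection from the $12$-starting members onto the $13$-starting members of $\mathcal F(n;312)$. The key step, and the main obstacle, is to show that this swap increases the number of inversions by \emph{exactly one}. In a $12$-starting partition the letter $2$ occupies position $2$ while $3$ occupies some position $j\ge 3$, so $2$ precedes $3$ and contributes no inversion between them; after the swap $3$ precedes $2$, contributing one. It then remains to verify that no other pair changes status: every letter other than $1,2,3$ has value $\ge 4$ and the letter $1$ sits first and is inert, so whenever such a letter is compared with the contents of positions $2$ and $j$, the outcome depends only on the \emph{positions} involved and not on whether the value sitting there is $2$ or $3$ (both are smaller than any value $\ge 4$ and larger than $1$). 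Hence the swap adds precisely one inversion, giving
\[ B_n(q) = \sum_{\pi\text{ starts }12} q^{\mathtt{inv}(\pi)+1} = q\,F_{n-1}(q). \]
Adding the two pieces yields $F_n(q) = (1+q)F_{n-1}(q)$, and the induction completes the proof. The delicate point throughout is the bookkeeping of inversions under the two surgeries; once one observes that the distinguished small letters $1,2,3$ interact with the rest of the word only through their positions, the entire count collapses to the single transposition of $2$ and $3$.
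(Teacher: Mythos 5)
Your proof is correct and follows essentially the same route as the paper's: both split $\mathcal{F}(n;312)$ into the $12$-starting and $13$-starting classes via Proposition~\ref{qh}, use the prepend-$1$ map and the $2\leftrightarrow 3$ swap of Corollary~\ref{ww}, and derive the recurrence $F_n(q)=(1+q)F_{n-1}(q)$. Your write-up is in fact tighter than the paper's, since you explicitly verify that the swap adds exactly one inversion (which the paper only asserts, and with a typo writing $q\cdot P_n(q)$ where $q\cdot P_{n-1}(q)$ is meant).
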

 \begin{proof}
It suffices to prove that \begin{equation*}
\sum_{\pi \in \mathcal{F}(n; 312)} q^{\mathtt{inv}(\pi)} = \sum_{\pi \in \mathcal{F}(n-1; 312)} q^{\mathtt{inv}(\pi)}(1+q).
\end{equation*} For $n = 1, 2$, the identity is the only $312$- avoiding flattened partition.
From Proposition \ref{qh} and Corollary \ref{ww}, there are only two classes of $312$-avoiding flattened partitions: one class starting with $12$ and another one starting with $13$ and their sizes are equal. To create the first class, we consider a $312$-avoiding flattened partition $\pi$ of length $n-1$ and insert the integer $1$ at the beginning of $\pi$. We then increase by $1$ the remaining terms to get a $312$-avoiding flattened partition $\sigma$ of length $n$. Let $P_{n}(q) = \sum_{\pi \in \mathcal{F}(n; 312)} q^{\mathtt{inv}(\pi)}$. Then the first class contributes $1\cdot P_{n-1}(q)$ inversions. To create the second class, we interchange the integers $2$ and $3$ of the first class. Hence the second class contributes $q \cdot P_{n}(q)$ inversions. Summing the inversions proves the theorem.
\end{proof}
\subsection{$321$-avoiding}
\begin{thm}\label{ooj}
For any integer $n \geq 1$, we have $|\mathcal{F}(n; 321)| = C_{n-1}$, where $C_{n} = \frac{1}{n+1}\binom{2n}{n}$ is the  $n^{th}$ Catalan number with $C_{0} = 1$.
\end{thm}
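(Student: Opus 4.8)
The plan is to set up a bijection between $\mathcal{F}(n;321)$ and the set $S(n-1;321)$ of ordinary $321$-avoiding permutations of $[n-1]$, and then invoke the classical theorem of Knuth \cite{knuth1998art} that $|S(m;321)|=C_m$. Concretely, I would define $\Phi\colon S(n-1;321)\to\mathcal{F}(n;321)$ by prepending a new smallest letter: $\Phi(\tau)$ is the word $1\,(\tau(1)+1)(\tau(2)+1)\cdots(\tau(n-1)+1)$, i.e. increase every entry of $\tau$ by one and place a $1$ in front. The candidate inverse $\Psi$ deletes the leading letter of $\sigma\in\mathcal{F}(n;321)$ and standardizes what remains; since every flattened partition has $\sigma(1)=1$, this standardization just subtracts $1$ from each surviving entry. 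The base cases $n=1,2$ (where the identity is the unique flattened partition and $C_0=C_1=1$) I would check directly.

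Two things must be verified for $\Phi$ to be well defined. First, $\Phi(\tau)$ must be $321$-avoiding: the leading $1$ is simultaneously the smallest entry and the first letter, so it can never be the top of a descending triple, and the remaining word is order-isomorphic to $\tau$, which avoids $321$ by hypothesis. Symmetrically, deleting a letter cannot create a new pattern occurrence, so $\Psi(\sigma)$ is $321$-avoiding and $\Psi$ indeed lands in $S(n-1;321)$. These observations are routine.

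The second and genuinely substantive point is that $\Phi(\tau)$ is an honest flattened partition, i.e. that its runs have strictly increasing starting points. Writing $\sigma=\Phi(\tau)$, its first run starts at $\sigma(1)=1$, while every later run starts at a value $\tau(j+1)+1$ coming from a descent $\tau(j)>\tau(j+1)$ of $\tau$, listed in order of increasing $j$. Hence it suffices to prove that the descent bottoms $\tau(j+1)$ of $\tau$ increase with $j$. This is exactly where $321$-avoidance does the work: if descents occurred at positions $j_1<j_2$ with $\tau(j_1+1)>\tau(j_2+1)$, then $\tau(j_1)>\tau(j_1+1)>\tau(j_2+1)$ at the positions $j_1<j_1+1<j_2+1$ would be a descending subsequence of length three, contradicting $\tau\in S(n-1;321)$. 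Therefore the descent bottoms increase, they all exceed $1$, and so the full list of run-starting points of $\sigma$ is increasing, giving $\sigma\in\mathcal{F}(n;321)$. Since $\Psi\circ\Phi$ and $\Phi\circ\Psi$ are visibly the identity, $\Phi$ is a bijection and $|\mathcal{F}(n;321)|=|S(n-1;321)|=C_{n-1}$. I expect this descent-bottom computation to be the only delicate step; the rest is bookkeeping.
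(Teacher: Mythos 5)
Your proposal is correct and follows essentially the same route as the paper: the paper's Lemma (its map $h$ and inverse $h'$) is exactly your $\Psi$ and $\Phi$ (delete/prepend the leading $1$ with standardization), combined with the same appeal to Knuth's enumeration of $321$-avoiding permutations. Your key step—using $321$-avoidance to show the descent bottoms, hence run starting points, of $\Phi(\tau)$ increase—is precisely the observation the paper makes about the only possible obstruction to $h'(\sigma)$ being flattened, and your identification of the witness $c$ as the descent top preceding the offending run start is, if anything, stated more cleanly than in the paper.
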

\begin{proof}
As shown by Knuth in \cite{knuth2014art}, $|\mathcal{S}(n-1; 321)| = C_{n-1}$. Thus the proof of Theorem \ref{ooj} follows from Lemma \ref{jorg}.
\begin{lem}\label{jorg}
For $n \geq 1$, there exists a bijection $h: \mathcal{F}(n; 321) \rightarrow \mathcal{S}(n-1; 321)$ defined by removing integer $1$ from $\pi \in \mathcal{F}(n; 321)$ and reducing the remaining $\pi$ elements by $1$.
\end{lem}
It is easy to see how to construct the inverse mapping $h' : \mathcal{S}(n-1; 321) \rightarrow \mathcal{F}(n; 321)$, and that both $h$ and $h'$ preserve $321$ avoidance. One observation worth noting is that $h'(\sigma)$ is a flattened partition even if $\sigma \in \mathcal{S}(n-1; 321)$ is not. The only obstruction to this would be if the first entries of the runs (except the first run) of $\sigma$ are not in increasing order. In this case, it would imply that there exists integers $b$ and $a$ both starting points of such runs such that $b > a$, although $a$ occurs later. Then there would exist an integer $c > b > a$ in the first run such that $cba$ is a $321$ occurrence.
\end{proof}
\subsection{$231$-avoiding}
\begin{lem}\label{ino}
Let $n$ be a positive integer and $\pi$ be a $231$-avoiding flattened partition of length $n$. There exists an integer $2 \leq k \leq n$ such that:
\begin{enumerate}
	\item [(i)] $\pi(i) < k$ if $i < k$,
	\item [(ii)] $\pi(k) = n$,
	\item [(iii)] $\pi(i) \geq k$ if $i > k$.
\end{enumerate}
\end{lem}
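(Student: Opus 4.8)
The plan is to locate the letter $n$ and show that it acts as a separator for the values of $\pi$. First I would let $k$ be the unique position with $\pi(k) = n$. Since $\pi$ is a flattened partition we have $\pi(1) = 1$, so for $n \ge 2$ the maximum $n$ cannot sit in the first position; hence $k \ge 2$, and part (ii) holds by the very definition of $k$. This simultaneously settles the range $2 \le k \le n$.

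Next I would split the remaining letters according to their position relative to $k$: set $L = \{\pi(i) : i < k\}$ and $R = \{\pi(i) : i > k\}$. These are disjoint with $L \cup R = [n-1]$, $|L| = k-1$ and $|R| = n-k$. The heart of the argument is that $231$-avoidance forces $\max L < \min R$. Indeed, suppose some $v \in L$ and $w \in R$ satisfied $v > w$, say $v = \pi(j)$ with $j < k$ and $w = \pi(\ell)$ with $\ell > k$. Then $j < k < \ell$ and $\pi(j) = v$, $\pi(k) = n$, $\pi(\ell) = w$ with $w < v < n$, so the subsequence $\pi(j)\pi(k)\pi(\ell)$ is order-isomorphic to $231$, contradicting the hypothesis. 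Hence no such pair exists, and every element of $L$ is smaller than every element of $R$.

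Finally I would combine this separation with the cardinalities. Since $L$ and $R$ partition $[n-1]$ into a block of $k-1$ small values and a block of $n-k$ large values with $\max L < \min R$, the only possibility is $L = \{1, 2, \ldots, k-1\}$ and $R = \{k, k+1, \ldots, n-1\}$. Reading this back through the positions yields exactly parts (i) and (iii): for $i < k$ we get $\pi(i) \in L$, so $\pi(i) \le k-1 < k$, and for $i > k$ we get $\pi(i) \in R$, so $\pi(i) \ge k$.

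I expect the only real content to be the pattern-identification step, namely recognizing that a larger value placed to the left of a smaller value across the global maximum produces precisely a $231$ occurrence; everything else is bookkeeping with cardinalities. It is worth noting that the flattened-partition hypothesis is used only to guarantee $\pi(1) = 1$, and hence $k \ge 2$, while the separation property itself is a general feature of $231$-avoiding permutations.
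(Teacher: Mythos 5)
Your proof is correct and follows essentially the same route as the paper: take $k$ to be the position of $n$, observe that any larger value to the left of a smaller value across $n$ would give a $231$ occurrence, and conclude by counting that the left block is exactly $\{1,\ldots,k-1\}$. The paper phrases this via $m = \max\{\pi(j) : j < k\}$ and deduces $k = m+1$, but the argument is the same; your cardinality bookkeeping is just spelled out a bit more fully.
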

\begin{proof}
Let $k = {\pi(n)}^{-1}$ and $\pi = 1\pi(2)\cdots\pi(k-1)n\pi(k+1)\cdots \pi(n)$. Let $m = \max \{\pi(j) : 1 \leq j \leq k-1\}$. Necessarily, $\pi(l) > m$ for all $k+1 \leq l \leq n$ since else there would be a $231$ occurrence $mna$ where $a = \pi(l) < m$. Hence $k = m+1$.
\end{proof}
\begin{thm}\label{ff}
For any positive integer $n \geqslant 3$, we have 
\begin{equation}\label{nik}
 |\mathcal{F}(n; 231)| = |\mathcal{F}(n-1; 231)| + \sum_{k=2}^{n-1}|\mathcal{F}(k-1; 231)||\mathcal{F}(n-k; 231)|
\end{equation} with initial values $|\mathcal{F}(1; 231)| = 1, \:|\mathcal{F}(2; 231)| = 1$.
\end{thm}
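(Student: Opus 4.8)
The plan is to build a bijection from the decomposition supplied by Lemma~\ref{ino} and to read off Equation~(\ref{nik}) by summing over the location $k$ of the largest entry. Given $\pi \in \mathcal{F}(n; 231)$ with $n \geq 3$, Lemma~\ref{ino} produces the position $k$ with $\pi(k) = n$ and guarantees that $\pi(1)\cdots\pi(k-1)$ is a permutation of $\{1,\ldots,k-1\}$ while $\pi(k+1)\cdots\pi(n)$ is a permutation of $\{k,\ldots,n-1\}$. I would write $\pi = P\,n\,S$ with $P = \pi(1)\cdots\pi(k-1)$ and $S = \pi(k+1)\cdots\pi(n)$, and treat the extreme case $k = n$ (so that $S$ is empty) separately from the range $2 \leq k \leq n-1$.

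First I would check that $P \in \mathcal{F}(k-1; 231)$: it is a permutation of $\{1,\ldots,k-1\}$ starting with $1$, it is $231$-avoiding as a subword of $\pi$, and deleting the tail $nS$ only shortens the last run, so its run starting points stay increasing. The case $k = n$ then contributes exactly $|\mathcal{F}(n-1; 231)|$: appending the maximal value $n$ at the very end cannot serve as the small final letter of a $231$ occurrence, hence $\pi \mapsto P$ is a bijection onto $\mathcal{F}(n-1; 231)$.

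The crux is the suffix, where I claim $\pi(k+1) = k$, i.e.\ $S$ opens with the minimum of $\{k,\ldots,n-1\}$. The descent out of $\pi(k)=n$ forces a fresh run to begin at position $k+1$. The value $k$ is strictly smaller than every other suffix entry, so it cannot be preceded within a run by a smaller entry and must therefore itself start a run; were this not the first run of $S$, the flattened condition would fail, since the run opening at $\pi(k+1) > k$ would precede the run opening at the smaller value $k$. Hence $\pi(k+1) = k$, and subtracting $k-1$ from each suffix entry yields a standardized word $S'$ that begins with $1$, is $231$-avoiding, and inherits increasing run starting points, so $S' \in \mathcal{F}(n-k; 231)$. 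I expect this to be the main obstacle, as it is precisely where the flattened hypothesis (and not mere $231$-avoidance) is needed.

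Finally I would verify that $\pi \mapsto (P, S')$ is a bijection for each fixed $k$ by exhibiting the inverse $(P, S') \mapsto P\,n\,(S' + (k-1))$ and checking it lands in $\mathcal{F}(n; 231)$. Flattenedness is immediate, since every run starting point of $P$ is at most $k-1$, the entry $n$ merely extends the last run of $P$, and every run starting point of the shifted suffix is at least $k$. For $231$-avoidance it suffices to exclude cross occurrences: $n$ is maximal, so it could only be the middle letter of a $231$, which would require a later entry below an earlier one across the $P/S$ divide, contradicting that all $P$-values lie below all $S$-values; and any mixed triple drawn from $P$ and $S$ fails the defining inequality for the same reason. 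Summing the $k=n$ contribution $|\mathcal{F}(n-1;231)|$ with the contributions $|\mathcal{F}(k-1;231)|\,|\mathcal{F}(n-k;231)|$ for $k \in \{2,\ldots,n-1\}$ gives Equation~(\ref{nik}); the initial values $|\mathcal{F}(1;231)| = |\mathcal{F}(2;231)| = 1$ hold because the identity is the only flattened partition of length $1$ or $2$.
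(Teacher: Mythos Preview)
Your proof is correct and follows essentially the same decomposition as the paper: split at the position $k$ of the entry $n$ via Lemma~\ref{ino}, treat $k=n$ separately, and standardize the suffix by subtracting $k-1$. You are in fact more careful than the paper's own argument, explicitly justifying why $\pi(k+1)=k$ (so that the standardized suffix is flattened) and verifying that the inverse construction lands back in $\mathcal{F}(n;231)$, both of which the paper leaves implicit.
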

Thus $|\mathcal{F}(n; 231)| = M_{n-1}$, where $M_{n}$ is the $n^{th}$ Motzkin number (see OEIS A001006) with $M_{0} = 1$ as given by Aigner \cite{manu}.
\begin{proof}
We consider two cases depending on $k$: one case when $k < n$ and another case when $k = n$.  In the latter case, inserting $n$ at the end of each $\pi' \in \mathcal{F}(n-1; 231)$ gives $\pi \in \mathcal{F}(n; 231)$. Hence we have $|\mathcal{F}(n-1; 231)|$ unique flattened partitions having $n$ at the end of each $\pi$. 

In the case $k < n$, there are two subsequences on the left and right of $n$ for each $\pi \in \mathcal{F}(n; 231)$ i.e $1\pi(2)\cdots\pi(k-1)$ and $\pi(k+1)\cdots \pi(n)$ of lengths $k-1$ and $n-k$ respectively. Let $\pi_{1} = 1\pi(2)\cdots\pi(k-1)$ and $\pi_{2} = \pi(k+1)\cdots \pi(n)$. We note that subtracting integer $k-1$ from each element of $\pi_{2}$ gives $\pi \in \mathcal{F}(n-k; 231)$. By Lemma \ref{ino}, we have that each $\pi_{1} \in \mathcal{F}(k-1; 231)$. Multiplying and summing over $k$ indeed gives $\sum_{k=2}^{n-1}|\mathcal{F}(k-1; 231)||\mathcal{F}(n-k; 231)|$.

Summing the two cases together proves the claim.
\end{proof}
Alternatively, we also give a bijection between $231$-avoiding flattened partitions and the well known Motzkin paths which are also counted by Motzkin numbers. 
First, we introduce the so called \textit{Motzkin permutations} because they are in bijection with Motzkin paths. This was proved by Mansour et al.\ \cite{man4}. A permutation $\sigma$ is said to be Motzkin if it avoids pattern $132$ and there are no integers $i < j$ for which $\pi(i) < \pi(j) < \pi(j+1)$. The latter condition corresponds to avoidance of a kind of generalized patterns introduced by Babson and Steingr\'imsson \cite{man5}. There is a bijection $\alpha: \mathcal{M}_{n-1} \rightarrow \mathcal{F}(n; 231)$ between the set of Motzkin permutations over $[n-1]$ and $231$-avoiding flattened partitions over $[n]$ defined by the following: For each $\sigma \in \mathcal{M}_{n-1}$, increase by $1$ all elements of $\sigma$ and reverse their order to obtain $\pi'$. Then insert integer $1$ at the beginning of $\pi'$ to obtain $\alpha(\sigma) = \pi \in \mathcal{F}(n; 231)$. We remark that avoidance of pattern $132$ in $\sigma \in \mathcal{M}_{n-1}$ corresponds to avoidance of pattern $231$ in $\pi$. On the other hand, avoidance of the generalized pattern corresponds to $\pi \in \mathcal{F}(n)$.
\section{Avoidance of pairs of three letter patterns in flattened partitions}\label{sec3}
We shall consider avoidance of a pair of patterns $(\tau_{1}, \tau_{2})$ of length three. The cases of $(123, 132)$-avoiding and $((123, 213)$-, $(123, 231)$-, $(123, 312)$-, $(123, 321))$-avoiding flattened partitions are not very interesting: the counting sequences are $(1, 1, 0, 0, 0, \ldots)_{n \geq 1}$ and $(1, 1, 1, 0, 0, \ldots)_{n \geq 1}$ for the latter cases respectively.\\Similarly, the pairs $((132, 213), (132, 231), (132, 312), (132, 321))$ all have a trivial counting sequence $(1, 1, 1, 1, 1, \ldots)_{n \geq 1}$. In the subsections that follow, we consider avoidance of the remaining pairs of patterns.
\subsection{$(213, 231)$-avoiding}
Here, we consider the problem of avoiding both $213$ and $231$ patterns.
\begin{pro}\label{zz}
Let $n \geq 1$ be a positive integer. If $\pi$ is a $(213, 231)$-avoiding flattened partition, then there exists an integer $2 \leq k \leq n$ such that 
\begin{enumerate}
\item[(i)] $\pi(i) = i$ for $ 1 \leq i \leq k-1$,
\item[(ii)] $\pi(k) = n$,
\item[(iii)] $\pi(i) \geq k$ for $k+1 \leq i \leq n$.
\end{enumerate}
\end{pro}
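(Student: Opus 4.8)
The plan is to mimic the structure of Lemma \ref{ino}, exploiting the additional $213$-avoidance to sharpen conclusion (i) from $\pi(i) < k$ to the much stronger $\pi(i) = i$. First I would set $k = \pi^{-1}(n)$, so that conclusion (ii) holds by definition, and note that since $\pi$ is a flattened partition we have $\pi(1) = 1$, forcing $k \geq 2$. The element $n$ is the largest, so it cannot be followed by an ascent; hence $n$ sits at the end of a run, which is consistent with $\pi(k) = n$.

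For conclusion (iii), I would argue exactly as in Lemma \ref{ino}: let $m = \max\{\pi(j) : 1 \leq j \leq k-1\}$ be the largest value appearing strictly before position $k$. If some position $l > k$ had $\pi(l) < m$, then picking the position $j \leq k-1$ achieving the maximum $m$ gives the subsequence $\pi(j)\,\pi(k)\,\pi(l) = m\,n\,\pi(l)$, which is order-isomorphic to $231$ because $\pi(l) < m < n$. This contradicts $231$-avoidance, so every $\pi(l)$ with $l > k$ satisfies $\pi(l) \geq m$. Since $m$ is the largest of the first $k-1$ values and these together with the entries after $k$ exhaust $\{1,\dots,n\}\setminus\{n\}$, the first $k-1$ positions must hold precisely the values $\{1,\dots,m\}$, giving $m = k-1$ and $\pi(i) \geq k$ for all $i > k$, which is (iii).

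The substantive new content is conclusion (i), and this is where I expect the main obstacle to lie: I must upgrade ``the first $k-1$ entries are a permutation of $\{1,\dots,k-1\}$'' to ``$\pi(i) = i$ for each such $i$,'' and here the $213$-avoidance is essential. The idea is that the prefix $\pi(1)\cdots\pi(k-1)$ is itself a $(213,231)$-avoiding flattened partition on $\{1,\dots,k-1\}$, so it suffices to show that the only such object is the identity. I would prove this by induction on the prefix length: a flattened partition begins with $1$, and if its first run were to contain any descent-free jump producing a value $j$ before some smaller unplaced value, one readily exhibits either a $213$ or a $231$ pattern. Concretely, if the prefix is not the identity, let $i$ be the least index with $\pi(i) \neq i$; then $\pi(i) > i$, the value $i$ appears later as the start of a subsequent run, and the entry following that run-start together with $\pi(i)$ and $i$ furnishes a forbidden pattern.

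Assembling these pieces, the cleanest exposition is to first establish (ii) and (iii) verbatim from the Lemma \ref{ino} argument, thereby isolating the prefix as a flattened partition on $\{1,\dots,k-1\}$ avoiding both patterns, and then invoke the auxiliary claim that such prefixes are forced to be the identity to obtain (i). The only delicate point is checking that every case in the prefix argument genuinely produces a $213$ \emph{or} a $231$ occurrence rather than slipping through; once both forbidden patterns are available simultaneously this case analysis closes, since a non-identity flattened prefix always creates an early large value sitting above a later small value, which is exactly the configuration these two patterns between them rule out.
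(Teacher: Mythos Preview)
Your argument for (ii) and (iii) via Lemma~\ref{ino} is fine, but the attack on (i) has a real gap. You write that ``it suffices to show that the only such object is the identity,'' meaning the only $(213,231)$-avoiding flattened partition on $[k-1]$ is the identity. This is false: the very proposition you are proving describes non-identity elements of $\mathcal{F}(m;213,231)$ for every $m\geq 3$ (for instance $132$, $1423$, or more generally $1m23\cdots(m-1)$). Your concrete mechanism---take the least $i$ with $\pi(i)\neq i$, locate the value $i$ at a later run-start, and use the entry after it---does not close. If that next entry lies strictly between $i$ and $\pi(i)$, the triple $\pi(i),\,i,\,\pi(j+1)$ has pattern $312$, which is not among the forbidden patterns; the example $1423$ exhibits exactly this.

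What your argument is missing is that the pattern witnessing (i) must use the entry $n$ sitting \emph{outside} the prefix. The paper does this by invoking Lemma~\ref{ee}: since $\pi$ avoids $213$, the integer $n$ lies at the end of the \emph{first} run, so positions $1,\dots,k$ form a single increasing run; combined with the fact (from the Lemma~\ref{ino} argument) that positions $1,\dots,k-1$ carry exactly the values $1,\dots,k-1$, this forces $\pi(i)=i$ immediately. Equivalently, without citing Lemma~\ref{ee}: if the prefix had any descent $\pi(j)>\pi(j+1)$ with $j<k-1$, then $\pi(j)\,\pi(j+1)\,n$ would be a $213$ occurrence in $\pi$. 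Either way, the $213$-avoidance has to be applied to a triple that reaches position $k$, not to the prefix in isolation.
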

\begin{proof}
Let $\pi = 1\pi(2)\cdots \pi(k-1)n\pi(k+1)\cdots \pi(n), x = \max\{\pi(j) : 1 \leq j \leq k-1\}$. By Proposition \ref{ee}, the integer $n$ must be at the end of the first run. By Theorem \ref{ino}, the integers $1, 2, \ldots , k-1$ are also elements of the first run. Hence conditions $(i)$ and $(ii)$ are satisfied. Necessarily, $\pi (y) >  x$ for all $k+1 \leq y \leq n$, $\pi(y) \geq x$ since else there would be a $231$ occurrence $xny$ . Hence $k = x+1$. 
\end{proof}
\begin{lem}\label{m}
For any integer $n \geq 1$, all elements of $\mathcal{F}(n; 213, 231)$ start with either $12$ or $1n$.
\end{lem}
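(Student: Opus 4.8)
The plan is to read off the result directly from the structural description already established in Proposition \ref{zz}, which pins down the initial segment of any $(213,231)$-avoiding flattened partition. The observation driving the argument is that the single parameter $k$ furnished by Proposition \ref{zz} already determines the first two entries of $\pi$, and that only two qualitatively different situations can occur, namely $k=2$ and $k\geq 3$.

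First I would dispose of the small cases $n=1$ and $n=2$ by hand: for $n=1$ the only flattened partition is the single letter $1$, and for $n=2$ the only flattened partition is $12$, which trivially begins with $12$ (and here $12=1n$, so no ambiguity arises). For $n\geq 3$, let $\pi\in\mathcal{F}(n;213,231)$ and apply Proposition \ref{zz} to obtain an integer $2\leq k\leq n$ satisfying conditions (i)--(iii).

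Next I would split according to the value of $k$. If $k=2$, then condition (i) is the single requirement $\pi(1)=1$, and condition (ii) gives $\pi(2)=\pi(k)=n$; hence $\pi$ starts with $1n$. If instead $k\geq 3$, then both $i=1$ and $i=2$ lie in the range $1\leq i\leq k-1$, so condition (i) forces $\pi(1)=1$ and $\pi(2)=2$; hence $\pi$ starts with $12$. Since the two cases $k=2$ and $k\geq 3$ are exhaustive, every element of $\mathcal{F}(n;213,231)$ begins with $12$ or $1n$, as claimed.

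There is essentially no serious obstacle here: the lemma is a corollary of Proposition \ref{zz}, and the argument amounts to unwinding what conditions (i) and (ii) say about positions $1$ and $2$. The only point requiring a moment's care is the case boundary at $k=2$ versus $k=3$, together with checking the degenerate overlap $12=1n$ when $n=2$ so that the dichotomy in the statement is read inclusively rather than exclusively.
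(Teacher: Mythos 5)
Your proof is correct, but it takes a genuinely different route from the paper's. The paper's (sketched) proof adapts the contradiction argument of Proposition \ref{qh}: suppose $\pi \in \mathcal{F}(n; 213, 231)$ starts with $1i$ with $3 \leq i \leq n-1$; then $2$ starts the second run, and the element $i+1$ either lies to the right of $2$, giving the $213$ occurrence $i\,2\,(i+1)$, or lies in the first run to the left of $2$, giving the $231$ occurrence $i\,(i+1)\,2$ --- a contradiction either way, so $i \in \{2, n\}$. You instead read the lemma off the structure result, Proposition \ref{zz}, by splitting on the parameter $k$: if $k=2$ then conditions (i) and (ii) force the prefix $1n$, while if $k \geq 3$ condition (i) forces the prefix $12$. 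Your derivation is sound and involves no circularity, since Proposition \ref{zz} precedes the lemma and its proof does not invoke it; it is also shorter, because all the pattern-chasing has already been absorbed into Proposition \ref{zz}. What the paper's route buys is self-containedness: it exhibits explicitly which forbidden occurrence arises and does not inherit any fragility from Proposition \ref{zz}, whose own proof in the paper is rather compressed (it even cites Lemma \ref{ee} and Lemma \ref{ino} under the wrong environment names). A minor point in your favor is the explicit treatment of the degenerate cases: for $n=1$ Proposition \ref{zz} supplies no admissible $k$ at all, and for $n=2$ the two prefixes $12$ and $1n$ coincide, both of which the paper glosses over.
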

The proof is similar to that of Proposition \ref{qh}, just that in this case, we suppose that $\pi \in \mathcal{F}(n; 213, 231)$ starts with $1i$ where $ 3 \leq i \leq n-1$ and then prove by contradiction that this is not possible.
\begin{pro}
For any integer $n \geq 1$, we have $|\mathcal{F}(n; 213, 231)| = F_{n}$ where $F_{n}$ is the Fibonacci number with initial conditions $F_{1} = F_{2} = 1$.
\end{pro}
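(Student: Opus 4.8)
The plan is to show that the sequence $a_n := |\mathcal{F}(n; 213, 231)|$ satisfies the Fibonacci recurrence $a_n = a_{n-1} + a_{n-2}$ for $n \geq 3$, with $a_1 = a_2 = 1$; since $F_n$ obeys the same recurrence and initial data, induction then yields $a_n = F_n$. The engine is Lemma~\ref{m}, which already tells us that every $\pi \in \mathcal{F}(n; 213, 231)$ begins with either $12$ or $1n$, so the set splits into exactly two disjoint classes, and it suffices to count each.

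For the class beginning with $12$, I would define a map that deletes the leading $1$ and standardizes the remaining word (subtract $1$ from every entry). Since $\pi(2)=2$, the resulting word begins with $1$, and because we have only removed the globally smallest entry, the descent set of the tail is unchanged, so the run starting points remain increasing and the image is again a flattened partition, now over $[n-1]$. Deleting the smallest letter cannot create a new occurrence of $213$ or $231$, so the image lies in $\mathcal{F}(n-1; 213, 231)$; the inverse adds $1$ to every entry and prepends a $1$. I would check that prepending the global minimum at the front introduces no forbidden pattern, using that in both $213$ and $231$ the smallest letter does not occupy the first position. This gives a bijection, so this class has size $a_{n-1}$.

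For the class beginning with $1n$, the first run is exactly $1n$ (after the maximum $n$ there must be a descent), and by Proposition~\ref{zz} with $k=2$ the next run starts at $2$; deleting this first run $1n$ and standardizing the remaining permutation of $\{2,\dots,n-1\}$ produces an element of $\mathcal{F}(n-2; 213, 231)$, with inverse map prepending $1n$ to a shifted copy. \textbf{The step I expect to be the main obstacle} is verifying that this reverse map lands inside the avoidance class, that is, ruling out forbidden patterns that use the inserted $n$. This is where I would argue that, with $n$ placed at position $2$ and only the letter $1$ before it, $n$ can play no role in a $213$ (it is too large to be a low entry, and it lacks the two predecessors needed to serve as the final, largest entry) and no role in a $231$ (as the largest entry it could only be the central letter, which would force a later entry strictly smaller than $1$). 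Hence no new occurrence is created and this class has size $a_{n-2}$.

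Summing the two classes gives $a_n = a_{n-1} + a_{n-2}$. Finally I would confirm the base cases directly — the only flattened partitions are $1$ for $n=1$ and $12$ for $n=2$, so $a_1 = a_2 = 1$ — and conclude $a_n = F_n$ by induction on $n$.
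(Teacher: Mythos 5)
Your proposal is correct and takes essentially the same approach as the paper: both split $\mathcal{F}(n; 213, 231)$ via Lemma \ref{m} into the classes beginning with $12$ and $1n$, and biject these with $\mathcal{F}(n-1; 213, 231)$ and $\mathcal{F}(n-2; 213, 231)$ respectively to obtain the Fibonacci recurrence (the paper phrases the bijections as insertions, you as deletions with the insertions as inverses). Your verification that the insertions create no new $213$ or $231$ occurrences is actually more detailed than the paper's own argument.
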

\begin{proof}
We prove this claim by induction. For $n = 1, 2$, the identity is the only $(213, 231)$-avoiding flattened partition. 

For $n \geq 3$, from Lemma \ref{m}, there are two classes of $(213, 231)$-avoiding flattened partition: one class starting with $12$ and another class starting with $1n$. To create the first class, for each $\pi' \in \mathcal{F}(n-1; 213, 231)$, inserting $1$ at the beginning of $\pi'$ and then increasing by $1$ the remaining terms gives $|\mathcal{F}(n-1; 213, 231)|$ unique flattened partitions. To create the second class, for each $\pi' \in \mathcal{F}(n-2; 213, 231)$, inserting the subsequence $1n$ at the beginning of $\pi'$, and then increasing the remaining elements by $1$ gives $|\mathcal{F}(n-2; 213, 231)|$ unique flattened partitions. It is clear that removing $1$ or the subsequence $1n$ from each $\pi \in \mathcal{F}(n; 213, 231)$ that starts with $12$ or $1n$ respectively gives the elements in the sets $\mathcal{F}(n-1; 213, 231)$ or $\mathcal{F}(n-2; 213, 231)$. Thus inductively, $$|\mathcal{F}(n; 213, 231)| = |\mathcal{F}(n-1; 213, 231)| + |\mathcal{F}(n-2; 213, 231)| = F_{n-1} + F_{n-2} = F_{n}. \qedhere$$
\end{proof}
\begin{exa}
Let us construct flattened partitions $\pi \in \mathcal{F}(6; 213, 231)$. Inserting $1$ at the beginning of each elements in $\mathcal{F}(5; 213, 231) = \{15234, 15243, 12345, 12354, 12534\}$, and then increasing the remaining elements by $1$ gives $\pi$ in the class starting with $12$. On the other hand, inserting the subsequence $16$ at the beginning of each element in the set $\mathcal{F}(4; 213, 231) = \{1234, 1243, 1423\}$, and then increasing the remaining elements by $1$ gives $\pi$ in the second class.
\end{exa}
Let us denote by $F(n, k)$ the number of $(213, 231)$-avoiding flattened partitions with $r$ runs, in which the first run $R_{1}$ has length $k$.
\begin{pro}\label{v}
For all integers $k, n$ such that $1 \leq k < n$, we have $F(n, k) = F_{n-k},$ where $F_{n-k}$ is the $(n-k)^{th}$ Fibonacci number.
\end{pro}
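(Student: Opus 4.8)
The plan is to show that prescribing the length $k$ of the first run determines that run completely, after which the rest of the permutation ranges freely over $\mathcal{F}(n-k;213,231)$; then $F(n,k)=|\mathcal{F}(n-k;213,231)|=F_{n-k}$ by the Fibonacci enumeration already established. The starting point is Proposition~\ref{zz}: for $\pi\in\mathcal{F}(n;213,231)$ it produces an index $k$ with $\pi(i)=i$ for $i<k$ and $\pi(k)=n$, where in fact $k\ge 2$ since $\pi(1)=1<\pi(2)$ forces at least two entries into the first run (so the genuinely meaningful range is $2\le k<n$). Because $\pi(k)=n$ is a descent top, the first run terminates at position $k$ and is therefore exactly $1\,2\cdots(k-1)\,n$, of length $k$. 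Thus once $k$ is fixed the first run is forced, and the surviving entries are precisely $\{k,k+1,\dots,n-1\}$, occupying positions $k+1,\dots,n$.

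Next I would set up the reduction. Let $\bar\pi$ be obtained by deleting the first run and subtracting $k-1$ from each remaining entry, so that $\bar\pi$ is a word on $[n-k]$. The crux is the claim that $\pi\mapsto\bar\pi$ is a bijection from the $(213,231)$-avoiding flattened partitions of $[n]$ whose first run has length $k$ onto $\mathcal{F}(n-k;213,231)$, with inverse given by shifting an element of $\mathcal{F}(n-k;213,231)$ up by $k-1$ and prepending $1\,2\cdots(k-1)\,n$. Avoidance in the forward direction is automatic, since deleting a subsequence and standardising cannot create a pattern; the two points that need genuine verification are that $\bar\pi$ really is flattened and that the inverse construction introduces no forbidden pattern.

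For the first point, I would show $\pi(k+1)=k$. The value $k$ is the minimum of the surviving entries, so it cannot lie in the interior of a run (its run-predecessor would be smaller) and hence starts a run; since the run-starting points of $\pi$ increase from left to right and $k$ is the smallest survivor, it starts the very first surviving run, giving $\pi(k+1)=k$ and so $\bar\pi(1)=1$, while the remaining run-starts inherit their increasing order, so $\bar\pi$ is flattened. For the inverse, the prepended block $1\,2\cdots(k-1)\,n$ places $1,\dots,k-1$ below and $n$ above every tail value and is itself increasing; I would check that no $213$ or $231$ can straddle the boundary, since in both patterns the leftmost entry must be the middle value, whereas a single prepended entry is always extremal relative to the tail, and when two prepended entries are used the increasing block precludes the required descent. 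Together with the fact that position $k$ remains a descent top (as $\pi(k+1)=k<n$), this shows the two maps are mutually inverse.

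The main obstacle is exactly this bijection check, namely the flattened condition $\pi(k+1)=k$ and the boundary-avoidance argument, rather than any final bookkeeping. Once it is in place, $F(n,k)=|\mathcal{F}(n-k;213,231)|=F_{n-k}$ is immediate. As a consistency check, summing over $2\le k\le n$ (with the identity permutation accounting for $k=n$) recovers $\sum_{k}F(n,k)=F_n$ via the identity $\sum_{j=1}^{n-2}F_j=F_n-1$.
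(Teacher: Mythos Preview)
Your proposal is correct and follows exactly the same route as the paper: use Proposition~\ref{zz} to pin down the first run as $1\,2\cdots(k-1)\,n$, then delete it and standardise (subtract $k-1$) to land in $\mathcal{F}(n-k;213,231)$, giving $F(n,k)=|\mathcal{F}(n-k;213,231)|=F_{n-k}$. The paper records this bijection in a single sentence (``removing integer $k-1$ from each element of $\sigma'$ gives $\pi\in\mathcal{F}(n-k;213,231)$ and vice versa''), whereas you spell out the checks---that $\pi(k+1)=k$ so the tail is flattened, and that no forbidden pattern can straddle the boundary in the inverse construction---which the paper leaves implicit; your observation that the stated range $1\le k<n$ is only meaningful for $k\ge 2$ is also well taken.
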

\begin{proof}
By Proposition \ref{zz}, $R_{1}$ is unique since given its length $k$, then $R_{1} = 12\cdots (k-2)(k-1)n$. The remaining runs denoted as $R_{2}R_{3}\cdots R_{r}$ thus have length $n-k$. Let $R_{2}R_{3}\cdots R_{r} = \sigma'$. We note that removing integer $k-1$ from each element of $\sigma'$ gives $\pi \in \mathcal{F}(n-k; 213, 231)$ and vice versa. Thus, \begin{equation*}F(n, k) = |\mathcal{F}(n-k; 213, 231)| = F_{n-k} \qedhere\end{equation*}
\end{proof}
\begin{pro}
The ordinary generating function $F(u, x)$ for the number of $(213, 231)$-avoiding flattened partitions is given by \begin{equation*} F(u, x) = \frac{1-u-u^2 + u^3 x^2}{(1-ux)(1-u-u^2)}.\end{equation*}
\end{pro}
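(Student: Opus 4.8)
The plan is to read off the coefficients $F(n,k)$ (the number of $(213,231)$-avoiding flattened partitions over $[n]$ whose first run has length $k$) from Proposition \ref{v} together with the structural description in Proposition \ref{zz}, and then assemble the bivariate generating function $F(u,x) = \sum_{n,k} F(n,k)\, u^{n} x^{k}$, in which $u$ marks $n$ and $x$ marks the length of the first run. I would split the sum into the single-run (identity) contribution and the multi-run contribution, since these are governed by different formulas.

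First I would treat the identity permutation $12\cdots n$, which is the unique $(213,231)$-avoiding flattened partition consisting of a single run, so that its first run has length $k=n$. Including the empty word as the $n=0$ base case, these contribute
$$\sum_{n \ge 0}(ux)^{n} = \frac{1}{1-ux}.$$
Next, for a partition with at least two runs the first run has length $k$ with $2 \le k \le n-1$. Here one must observe that $k=1$ is impossible for $n \ge 2$, because the first run begins with $1$ and therefore cannot be immediately followed by a descent; hence $F(n,1)=0$. By Proposition \ref{v} we have $F(n,k)=F_{n-k}$ on the relevant range, so, writing $j=n-k \ge 1$, the multi-run contribution factors as
$$\sum_{k \ge 2}\sum_{j \ge 1} F_{j}\, u^{j+k} x^{k} = \left(\sum_{k \ge 2}(ux)^{k}\right)\left(\sum_{j \ge 1} F_{j} u^{j}\right) = \frac{(ux)^{2}}{1-ux}\cdot \frac{u}{1-u-u^{2}},$$
where I invoke the standard Fibonacci generating function $\sum_{j \ge 1} F_{j} u^{j} = \dfrac{u}{1-u-u^{2}}$ for the convention $F_{1}=F_{2}=1$.

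Finally I would add the two pieces over the common denominator $(1-ux)(1-u-u^{2})$:
$$F(u,x) = \frac{1}{1-ux} + \frac{u^{3} x^{2}}{(1-ux)(1-u-u^{2})} = \frac{1-u-u^{2}+u^{3} x^{2}}{(1-ux)(1-u-u^{2})}.$$
The only genuinely delicate step is the bookkeeping at the boundaries: recognizing that the first run can never have length $1$, so that the inner geometric series begins at $k=2$ and yields the factor $(ux)^{2}$ rather than $ux$, and fixing the convention that counts the empty partition so that the numerator picks up its constant term. As a consistency check I would set $x=1$ and confirm that $F(u,1)=\dfrac{1-u^{2}}{1-u-u^{2}}=1+\dfrac{u}{1-u-u^{2}}$, which correctly recovers $|\mathcal{F}(n; 213, 231)| = F_{n}$.
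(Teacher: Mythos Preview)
Your proposal is correct and follows essentially the same approach as the paper, which simply says to define $F(u,x)=\sum_{n\ge 0}\sum_{k\ge 0}F(n,k)x^{k}u^{n}$ and invoke Proposition~\ref{v}. You have filled in the details the paper omits: separating the identity (single-run) contribution $\sum_{n\ge 0}(ux)^{n}$ from the multi-run contribution, noting that $k\ge 2$ whenever there is a second run, and factoring the double sum via the Fibonacci generating function; your $x=1$ sanity check is also correct.
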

\begin{proof}
Letting \begin{equation*}F(u, x) = \sum_{n\geq 0}\sum_{k\geq 0} F(n, k)x^k u^n\end{equation*} and using Proposition \ref{v} gives the required result.\end{proof}
\subsection{$(312, 231)$-avoiding}
\begin{pro}
For any integer $n \geq 1$, we have $|\mathcal{F}(n; 312, 231)| = F_{n}$ where $F_{n}$ is the Fibonacci number with initial conditions $F_{1} = F_{2} = 1$.
\end{pro}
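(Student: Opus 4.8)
The plan is to prove the Fibonacci recurrence $|\mathcal{F}(n;312,231)| = |\mathcal{F}(n-1;312,231)| + |\mathcal{F}(n-2;312,231)|$ by induction on $n$, in the same spirit as the $(213,231)$ case treated above. The base cases $n=1,2$ leave only the identity. For $n\geq 3$, since every element of $\mathcal{F}(n;312,231)$ is in particular $312$-avoiding, Proposition \ref{qh} tells us it must start with either $12$ or $13$. I would therefore split $\mathcal{F}(n;312,231)$ into the class of elements beginning with $12$ and the class beginning with $13$, and count each separately.

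The first class is the easy one. Exactly as in the proof of Theorem \ref{qa} and the $(213,231)$ argument, deleting the leading $1$ and decreasing every remaining entry by $1$ is a bijection onto $\mathcal{F}(n-1;312,231)$, because reduction preserves both the flattened structure and avoidance of $312$ and $231$. Hence this class contributes $|\mathcal{F}(n-1;312,231)| = F_{n-1}$ terms by the inductive hypothesis.

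For the second class I first refine the structure: I claim that any such $\pi$ in fact begins with $132$, i.e.\ its first run is exactly $1\,3$. Indeed, the value $2$ cannot lie in the first run, which is increasing and already carries $3$ in its second position, so $2$ starts a later run and occurs to the right of position $2$. If the first run had a third entry $x=\pi(3)>3$, then $3,x,2$ would be a $231$ occurrence, a contradiction; hence $\pi(3)=2$. Granting this, I would map each such $\pi$ to the word obtained by deleting the prefix $1\,3$ and relabelling the surviving values $\{2\}\cup\{4,5,\dots,n\}$ order-isomorphically onto $[n-2]$ (so $2\mapsto 1$ and $j\mapsto j-2$ for $j\geq 4$). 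The image begins with $1$, is flattened (the tail of a flattened partition, once reduced, is again flattened, as in Proposition \ref{viot} and Theorem \ref{ff}), and avoids $312$ and $231$, so it lands in $\mathcal{F}(n-2;312,231)$. Summing the two classes then yields $F_{n-1}+F_{n-2}=F_n$.

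The hard part will be checking that this second map is a bijection, and the delicate direction is the inverse: given $\tau\in\mathcal{F}(n-2;312,231)$, one relabels its values by $1\mapsto 2$ and $i\mapsto i+2$ and prepends $1\,3$ to form $\pi = 1\,3\,2\cdots$, and must verify that $\pi$ still avoids $312$ and $231$. The key observations are, first, that the value $1$, being the global minimum and occurring in position $1$, can belong to no occurrence of $312$ or $231$ (neither pattern has its minimum in the first coordinate), so every potential forbidden pattern uses only values $\geq 2$; and second, that in the prefix $1\,3\,2$ the only values smaller than $3$ sit in the two outermost of the first three positions while every value exceeding $3$ lies strictly further right, which prevents the newly inserted entries $3$ and $2$ from ever completing either the large-first shape of $312$ or the large-in-the-middle shape of $231$. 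This confines any new occurrence to the relabelled copy of $\tau$, where none exists by hypothesis. Hence $\pi$ is $(312,231)$-avoiding, the two constructions are mutually inverse, and the recurrence, together with the base cases, gives $|\mathcal{F}(n;312,231)| = F_n$.
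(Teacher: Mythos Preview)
Your proposal is correct and follows essentially the same approach as the paper: split by Proposition~\ref{qh} into the classes starting with $12$ and $13$, biject the first with $\mathcal{F}(n-1;312,231)$ by stripping the leading $1$, and biject the second with $\mathcal{F}(n-2;312,231)$ by stripping the prefix $1\,3$ and standardising. If anything, you are more careful than the paper, which asserts the second bijection without first proving that every $\pi$ starting with $13$ must in fact begin $1\,3\,2$; your $231$-occurrence argument with $3,x,2$ fills exactly that gap, and your discussion of why the inverse preserves avoidance is likewise more explicit than the paper's ``it is clear''.
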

\begin{proof}
We prove the claim by induction. For $n = 1, 2$, the identity permutation is the only $(312, 231)$-avoiding flattened partition. 

For $n \geq 3$, from Proposition \ref{qh}, there are two classes of $(312, 231)$-avoiding flattened partition: one class starting with $12$ and another class starting with $13$. To create the first class, for each $\pi' \in \mathcal{F}(n-1, 312, 231)$, inserting $1$ at the beginning of $\pi'$ and then increasing by $1$ the remaining terms gives $|\mathcal{F}(n-1, 312, 231)|$ unique flattened partitions. To create the second class, for each $\pi' \in \mathcal{F}(n-2, 312, 231)$, inserting the subsequence $13$ at the beginning of $\pi'$, and then increasing the first element of $\pi'$ by $1$, and the remaining elements by $2$ gives $|\mathcal{F}(n-2, 312, 231)|$ unique flattened partitions. It is clear that removing $1$ or the subsequence $13$ from each $\pi \in \mathcal{F}(n, 312, 231)$ that starts with $12$ or $13$ respectively gives elements in the sets $\mathcal{F}(n-1, 312, 231)$ or $\mathcal{F}(n-2, 312, 231)$. Thus inductively, $$|\mathcal{F}(n, 312, 231)| = |\mathcal{F}(n-1, 312, 231)| + |\mathcal{F}(n-2, 312, 231)| = F_{n-1}+ F_{n-2} = F_{n}. \qedhere$$ \end{proof}
\begin{exa}
Let us construct flattened partitions $\pi \in \mathcal{F}(6, 312, 231)$.  Inserting $1$ at the beginning of each elements in $\mathcal{F}(5, 312, 231) = \{13245, 13254, 12345, 12354, 12435\}$, and then increasing the remaining elements by $1$ gives $\pi$ in the class starting with $12$. On the other hand, inserting the subsequence $13$ at the beginning of each element in the set $\mathcal{F}(4, 312, 231) = \{1234, 1243, 1324\}$, and then increasing the first element in this set by $1$ and the remaining elements by $2$ gives $\pi$ in the second class.
\end{exa}
\subsection{(213, 312)-avoiding}
From Proposition \ref{qh}, we have already seen that $312$-avoiding flattened partitions either start with $12$ or $13$. Hence $(213, 312)$-avoiding flattened partitions also have the same classes. However, there is only one flattened partition in this class that starts with $13$.
\begin{lem}\label{luck}
For $n \geq 3$, the only $\pi \in \mathcal{F}(n; 213, 312)$ that starts with $13$ has $2$ as a singleton second run.
\end{lem}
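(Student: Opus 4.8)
The plan is to pin the permutation down completely by tracking the forced positions of the small entries $1$, $2$, $3$, and only then argue uniqueness. Since $\pi \in \mathcal{F}(n; 213, 312)$ starts with $13$, its first run is an ascending block beginning $1\,3\cdots$, so the value $2$ cannot lie in the first run. First I would show that $2$ must be the \emph{starting point} of a run: within any run the entries increase, so if $2$ were not the first entry of its run it would be immediately preceded by a smaller value, necessarily $1$; but $1$ sits in position $1$ and is followed there by $3$, a contradiction. Because the starting points of the runs of a flattened partition are strictly increasing and begin at $1$, and $2$ is the least integer exceeding $1$, the value $2$ must in fact begin the \emph{second} run.

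Next I would prove that this second run is the singleton $2$. Suppose instead it were $2\,a\cdots$ with $a>2$; since $3$ already lies in the first run we have $a\neq 3$, hence $a\geq 4$. The entry $3$ occupies position $2$, strictly before $2$, which in turn precedes $a$, so the subsequence $3\,2\,a$ occurs in $\pi$; as $2<3<a$ this is order-isomorphic to $213$, contradicting $213$-avoidance. Hence the second run contains only $2$.

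The uniqueness then follows from a terminal-position observation, which I expect to be the crux. A run of length one closes immediately after its single entry, so were any entry to follow $2$ it would have to be smaller than $2$ in order to end the run; the only value below $2$ is $1$, already placed at the front. Therefore no entry follows $2$, i.e. $2$ is the last letter of $\pi$, and $\pi$ has exactly two runs. Consequently every remaining value $3,4,\dots,n$ must belong to the first run, which, being ascending and beginning $1\,3$, is forced to equal $1\,3\,4\cdots n$. Thus the only candidate is $\pi = 1\,3\,4\cdots n\,|\,2$, and a direct check confirms $\pi \in \mathcal{F}(n; 213, 312)$: every inversion of $\pi$ ends at the terminal $2$, so neither a $213$ nor a $312$ can be completed by a further entry to its right. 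The main obstacle is recognizing that a singleton second run, combined with the absence of any value below $2$ after $1$, forces $2$ to be terminal; it is precisely this step that collapses all freedom in the later runs and upgrades the local description of the second run into genuine uniqueness.
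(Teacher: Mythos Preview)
Your argument is correct and follows essentially the same route as the paper: identify $2$ as the head of the second run, use the subsequence $3\,2\,c$ to force that run to be a singleton, and conclude that the remaining values $4,\dots,n$ must sit in the first run, yielding the unique $\pi=1\,3\,4\cdots n\,|\,2$. You are in fact more careful than the paper on two points---you explicitly justify why nothing can follow $2$ (hence no third run), and you verify that the candidate actually lies in $\mathcal{F}(n;213,312)$---both of which the paper leaves implicit.
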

\begin{proof}
Since $\pi$ starts with $13$, then $2$ is the starting point of the second run and $3 \in R_{1}$. Suppose $\pi$ has at least two runs and that the second run is not singleton. Then there would exist an integer $c > 3 > 2$ to the right of $2$ such that we have a $213$ occurrence $32c$. Since the position of $1, 2$ and $3$ are are known, then the remaining $n-3$ elements can be arranged in $R_{1}$ as an increasing sequence in $R_{1}$ after $3$ and there is only one way this can be done. 
\end{proof}
\begin{pro}	
For any integer $n \geq 3$, the number of $(213, 312)$-avoiding flattened partitions satisfies the recurrence relation $|\mathcal{F}(n; 213, 312)| = |\mathcal{F}(n-1; 213, 312)| + 1$ with initial condition $|\mathcal{F}(2; 213, 312)| = 1$.
\end{pro}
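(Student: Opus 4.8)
The plan is to partition $\mathcal{F}(n; 213, 312)$ according to Proposition~\ref{qh}, which guarantees that every such flattened partition begins with either $12$ or $13$. By Lemma~\ref{luck}, the subclass beginning with $13$ contributes exactly one element, namely the unique partition having $2$ as a singleton second run with the remaining entries arranged increasingly in the first run. Thus it suffices to exhibit a bijection between the subclass beginning with $12$ and the whole set $\mathcal{F}(n-1; 213, 312)$, which will supply the ``$+1$'' in the recurrence.

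First I would define the forward map on the $12$-subclass by deleting the leading entry $1$ and subtracting $1$ from every surviving entry, producing a word $\pi'$ over $[n-1]$. Since $\pi$ begins with $12$, after deletion the new first entry is $1$, so $\pi'$ again starts with $1$; removing the global minimum from the front of the first run merely trims that run at its left end and leaves the ascent/descent pattern of all later positions untouched, so $\pi'$ is still a flattened partition. Because the operation is an order-preserving relabeling of the retained entries, any $213$ or $312$ occurrence in $\pi'$ would lift to the same pattern in $\pi$; hence $\pi' \in \mathcal{F}(n-1; 213, 312)$.

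For the inverse, I would take any $\sigma \in \mathcal{F}(n-1; 213, 312)$, increase every entry by $1$, and prepend the entry $1$. Since $\sigma$ starts with $1$, after the shift it starts with $2$, and prepending $1$ gives a word starting with $12$ whose first run is exactly the old first run extended by $1$ at its front, so the flattened structure is preserved. The crucial step, which I expect to be the main obstacle, is verifying that prepending the global minimum introduces no new forbidden pattern: in each of $213$ and $312$ the smallest of the three entries occupies a non-initial position, whereas the inserted $1$ sits in the very first position, so it cannot serve as the ``$1$'' of either pattern, and no fresh occurrence can arise. The two maps are visibly mutually inverse, so the subclass beginning with $12$ has cardinality $|\mathcal{F}(n-1; 213, 312)|$.

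Finally, the base case is immediate: for $n = 2$ the only flattened partition over $[2]$ is the identity $12$, which trivially avoids every three-letter pattern, giving $|\mathcal{F}(2; 213, 312)| = 1$. Combining the bijection above with the single partition beginning with $13$ then yields $|\mathcal{F}(n; 213, 312)| = |\mathcal{F}(n-1; 213, 312)| + 1$ for all $n \geq 3$, as claimed.
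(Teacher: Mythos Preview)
Your proof is correct and follows essentially the same approach as the paper: both split $\mathcal{F}(n;213,312)$ into the $12$- and $13$-subclasses via Proposition~\ref{qh}, invoke Lemma~\ref{luck} to see that the $13$-subclass contributes exactly one element, and use the ``prepend $1$ and shift'' bijection to identify the $12$-subclass with $\mathcal{F}(n-1;213,312)$. Your justification that prepending the global minimum cannot create a new $213$ or $312$ occurrence is in fact more explicit than what the paper provides.
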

\begin{proof}
From Lemma \ref{luck}, there is exactly one $\pi' \in \mathcal{F}(n-1; 213, 312)$ that starts with $13$. Inserting $n$ at the end of the first run of $\pi'$ gives $1$ unique flattened partition $\pi \in \mathcal{F}(n; 213, 312)$ that starts with $13$. By Proposition \ref{qh}, the second class of $(213, 312)$-avoiding flattened partitions starts with $12$. To create this class, we insert $1$ at the beginning of each $\pi' \in \mathcal{F}(n-1; 213, 312)$, and then increase the remaining elements of $\pi'$ by $1$. This gives $|\mathcal{F}(n-1; 213, 312)|$ unique flattened partitions. It is clear that removing $1$ or $n$ from each $\pi \in \mathcal{F}(n; 213, 312)$ that starts with $12$ or $13$ respectively gives the elements in the set $\mathcal{F}(n-1; 213, 312)$ or the only element in $\mathcal{F}(n-1; 213, 312)$ that starts with $13$.
\end{proof}
\subsection{(213, 321)-avoiding}
\begin{pro}
For any positive integer $n \geq 2$, \begin{equation}\label{gl}
|\mathcal{F}(n; 213, 321)| = |\mathcal{F}(n-1; 213, 321)| + n-2
\end{equation} with initial conditions $|\mathcal{F}(1; 213, 321)| = 1$. Consequently, \begin{equation}\label{gb}
|\mathcal{F}(n; 213, 321)| = \binom{n-1}{2} + 1.
\end{equation}
\end{pro}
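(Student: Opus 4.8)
The plan is to combine the structural description of $213$-avoiding flattened partitions from Proposition \ref{viot} with the extra constraint of $321$-avoidance, and then read off both the recurrence and the closed form from the resulting normal form. By Proposition \ref{viot}, every $\pi \in \mathcal{F}(n; 213)$ can be written as $\pi = p_{1}q_{1}p_{2}q_{2}\cdots p_{r-1}q_{r-1}p_{r}$ with $p_{1}p_{2}\cdots p_{r}q_{r-1}\cdots q_{1} = 12\cdots n$, so that the value-blocks satisfy $p_{1} < p_{2} < \cdots < p_{r} < q_{r-1} < \cdots < q_{1}$. First I would show that imposing $321$-avoidance forces $r \leq 2$: if $r \geq 3$ then $q_{1}$, $q_{2}$ and $p_{3}$ are all non-empty and occur in this left-to-right order in $\pi$, while their values decrease block-wise (any entry of $q_{1}$ exceeds any entry of $q_{2}$, which in turn exceeds every $p$-value); choosing one entry from each yields a $321$-occurrence. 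Hence the only possibilities are the identity ($r=1$) and the words $\pi = p_{1}q_{1}p_{2}$ with $12\cdots n = p_{1}p_{2}q_{1}$ split into three non-empty consecutive blocks ($r=2$), and one checks directly that every such word is $321$-avoiding because its longest decreasing subsequence has length two (it is automatically $213$-avoiding, being of the form guaranteed by Proposition \ref{viot}).

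With this normal form in hand, I would establish the recurrence through the deletion-of-$n$ map suggested by Lemma \ref{ee}. Since $n$ always sits at the end of the first run, deleting it while keeping the remaining relative order sends $\mathcal{F}(n; 213, 321)$ to $\mathcal{F}(n-1; 213, 321)$, and the crux is to compute its fibres. A word $p_{1}q_{1}p_{2}$ with $|q_{1}| \geq 2$ has its block $q_{1} = m\cdots n$ merely shortened to $m\cdots(n-1)$, so it is the unique preimage of the corresponding length-$(n-1)$ word; by contrast, a word with $q_{1} = \{n\}$ a singleton collapses to the identity $12\cdots(n-1)$, as does the identity $12\cdots n$ itself. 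Counting the singleton-$q_{1}$ words, which are exactly $1\cdots a\,,\, n\,,\, (a+1)\cdots(n-1)$ for $1 \leq a \leq n-2$ and hence number $n-2$, shows that the fibre over the identity has size $n-1$ while every other fibre is a singleton; summing over fibres then gives $|\mathcal{F}(n; 213, 321)| = |\mathcal{F}(n-1; 213, 321)| + (n-2)$, which is \eqref{gl}.

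Finally, the closed form \eqref{gb} follows by unwinding the recurrence from $|\mathcal{F}(1; 213, 321)| = 1$, since summing the increments yields $1 + \sum_{k=2}^{n}(k-2) = 1 + \binom{n-1}{2}$; equivalently it can be read straight off the normal form, the $r=2$ words being in bijection with the $\binom{n-1}{2}$ compositions of $n$ into three positive parts and the identity contributing the extra $+1$. I expect the main obstacle to be the $321$-avoidance step, and in particular making the case $r=3$ watertight, since there one must build the forbidden decreasing triple from a $p$-block together with two $q$-blocks rather than from three $q$-blocks, together with the careful bookkeeping of the fibres so that the number of genuinely new elements comes out to exactly $n-2$.
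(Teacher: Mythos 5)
Your proof is correct, but it takes a genuinely different route from the paper's. The paper works by insertion: using Lemma \ref{ee}, it inserts $n$ at the end of the first run of each $\sigma \in \mathcal{F}(n-1; 213, 321)$ (always legal, giving $|\mathcal{F}(n-1; 213, 321)|$ elements), observes that the identity $12\cdots(n-1)$ admits $n-2$ further insertion positions inside its single run, and rules out any other insertion into a non-identity $\sigma$ by exhibiting the $321$-occurrence $n(n-1)a$, where $a$ is the starting point of the second run; it then solves the resulting recurrence to get the closed form. You instead first derive a complete normal form for $\mathcal{F}(n; 213, 321)$ from Proposition \ref{viot}: $321$-avoidance forces $r \le 2$ (your decreasing triple taken from $q_{1}$, $q_{2}$, $p_{3}$ is exactly the right witness, and your check that every word $p_{1}q_{1}p_{2}$ is a union of two increasing subsequences settles the converse), so the set consists of the identity together with the words obtained from the $\binom{n-1}{2}$ ways of cutting $12\cdots n$ into three non-empty consecutive blocks. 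This yields the closed form \eqref{gb} directly, with no recurrence needed; your deletion-of-$n$ map with its fibre count (fibre of size $n-1$ over the identity, singletons elsewhere) is essentially the paper's insertion argument read backwards, but grounded in the classification it becomes fully rigorous. The trade-off: the paper's argument is shorter, while yours yields strictly more information --- an explicit description of every element of $\mathcal{F}(n; 213, 321)$ (in particular, each has at most two runs) --- and makes the surjectivity and uniqueness bookkeeping, which the paper leaves implicit, completely transparent.
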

\begin{proof}
For each $\sigma \in \mathcal{F}(n-1; 213, 321)$ and using Lemma \ref{ee}, inserting $n$ at the end of the first run preserves the number of runs and gives $\pi \in \mathcal{F}(n; 213, 321)$ with the subsequence $(n-1)n$ at the end of the first run. This gives $|\mathcal{F}(n-1; 213, 321)|$ unique flattened partitions. For the identity flattened partition $\textit{id} \in \mathcal{F}(n-1; 213, 321)$, there are $(n-2)$ more choices of inserting $n$ into positions  $n-1, n-2, \ldots, 2$ respectively in $\textit{id}$ to create an element $\pi \in \mathcal{F}(n; 213, 321)$. If $\sigma \in \mathcal{F}(n-1; 213, 321)$ is not the identity and we suppose that $n$ appears in the first run before $n-1$, then there exists an integer $a < n-1$, a starting point of the second run such that $\textit{n(n-1)a}$ is a $321$ occurrence. Thus such cases can not exist. Summing up gives Equation \ref{gl}, and solving this easy recursion gives Equation \ref{gb}.
\end{proof}
\subsection{$(231, 321)$-avoiding}
\begin{pro}
For any $n \geq 2$, $|\mathcal{F}(n; 231, 321)| = 2^{n-2}$.
\end{pro}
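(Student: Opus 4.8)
Write $a_n = |\mathcal{F}(n; 231, 321)|$. The plan is to locate the largest entry $n$ and read off a forced decomposition, exactly as in the $231$-avoiding analysis but with the extra $321$-constraint pinning down the tail. Since every $(231,321)$-avoiding flattened partition is in particular $231$-avoiding, Lemma \ref{ino} applies: there is a position $k$ with $\pi(k)=n$, with $\pi(i)<k$ for $i<k$ and $\pi(i)\geq k$ for $i>k$. Counting values then shows the prefix $\pi(1)\cdots\pi(k-1)$ is a permutation of $\{1,\dots,k-1\}$ and the suffix $\pi(k+1)\cdots\pi(n)$ is a permutation of $\{k,\dots,n-1\}$. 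First I would use $321$-avoidance to fix the suffix: if it contained a descent $\pi(i)>\pi(j)$ with $k<i<j$, then $n\,\pi(i)\,\pi(j)$ would be a $321$-occurrence; hence the suffix must be increasing, i.e. $\pi(k+1)\cdots\pi(n)=k(k+1)\cdots(n-1)$ (and is empty when $k=n$).

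This leaves only the prefix as a free parameter. Writing $\pi=\tau\,n\,k(k+1)\cdots(n-1)$ with $\tau$ a permutation of $\{1,\dots,k-1\}$, I would verify the two equivalences that make the decomposition a bijection. For avoidance: every $321$- or $231$-occurrence has its first two letters larger than its last, and since each prefix value is smaller than every suffix value while $n$ is the only entry exceeding a suffix value, the rightmost (smallest) letter of a forbidden pattern can never be a suffix entry (it would need two larger entries to its left, but only $n$ qualifies) nor $n$ itself; so every forbidden pattern lies entirely inside $\tau$, and $\pi$ avoids $\{231,321\}$ if and only if $\tau$ does. For the flattened property: as $\tau$ ends below $n$, the entry $n$ merely extends the last run of $\tau$, and the increasing suffix forms one further run whose starting point $k$ exceeds every starting point of $\tau$ (those being values in $\{1,\dots,k-1\}$); hence $\pi$ is flattened if and only if $\tau$ is. Consequently, for each $k$ with $2\leq k\leq n$ the partitions with $n$ in position $k$ are in bijection with $\mathcal{F}(k-1;231,321)$, giving the recurrence $a_n=\sum_{k=2}^{n}a_{k-1}=\sum_{j=1}^{n-1}a_j$.

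Finally I would solve this recurrence. Comparing $a_n=\sum_{j=1}^{n-1}a_j$ with $a_{n-1}=\sum_{j=1}^{n-2}a_j$ telescopes to $a_n=2a_{n-1}$ for $n\geq 3$, and since $a_2=a_1=1$ the closed form $a_n=2^{n-2}$ follows by induction. I expect the main obstacle to be the pair of equivalences in the second step: the avoidance direction rests on the clean observation that $\{231,321\}$-avoidance is exactly the condition that no entry has two larger entries to its left (so that $n$ alone blocks any further inversion straddling the suffix), and the flattened direction needs care that extending the last run of $\tau$ by $n$ and appending the increasing suffix run neither disturbs the existing starting points nor violates their increasing order. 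Everything else is routine.
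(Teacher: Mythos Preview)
Your proof is correct, but it takes a different route from the paper's. The paper decomposes according to the \emph{second entry} of $\pi$: either $\pi$ begins with $12$, or $\pi$ begins with $1i$ for some $i\geq 3$, and in the latter case one checks that the first run has exactly two elements (otherwise the first run would contain $1<i<j$ and $ij2$ would be a $231$). Each of the two classes is then put in bijection with $\mathcal{F}(n-1;231,321)$, yielding the doubling recurrence $a_n=2a_{n-1}$ in one step.

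Your argument instead locates $n$ and mirrors the $231$-avoiding analysis of Theorem~\ref{ff}; the extra $321$-constraint collapses the free suffix to a forced increasing block, so the convolution $\sum_k a_{k-1}a_{n-k}$ degenerates to the plain sum $\sum_k a_{k-1}$, which then telescopes to the same doubling. The paper's approach is shorter; yours has the merit of showing exactly how $(231,321)$-avoidance specialises the general $231$-decomposition, and the reformulation ``no entry has two larger entries to its left'' is a clean way to confine any forbidden pattern to the prefix $\tau$. Both arguments are equally rigorous.
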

\begin{proof}
For $n \geq 3$, there are two classes of $(231, 321)$-avoiding flattened partitions: one class that starts with $12$ and another class that starts with $1i$ for $3 \leq i \leq n$. Necessarily, in the latter class, $2$ is the starting point of the second run, and the first run has exactly two elements. Otherwise there exists integers $i, j$ in the first run, for $1 < i < j$, for which we would have a $231$-occurrence $ij2$.

To create the first class, we insert $1$ at the beginning of each $\pi' \in \mathcal{F}(n-1; 231, 321)$ and then increase the remaining elements of $\pi'$ by $1$. This gives $|\mathcal{F}(n-1; 231, 321)|$ unique flattened partitions. To create the second class, we increase all elements in $\pi' \in \mathcal{F}(n-1; 231, 321)$ that start with $1i$ except the first element by $1$ and then insert $2$ in the third position. This gives $|\mathcal{F}(n-1; 231, 321)|$ unique flattened partitions. It is clear how to invert these constructions. Thus we have $$|\mathcal{F}(n; 231, 321)| = 2|\mathcal{F}(n-1; 231, 321)|,$$  with initial condition $|\mathcal{F}(2; 231, 321)| = 1$. Solving this recursion proves the claim.
\end{proof}
\begin{rem}
All $(312, 321)$-avoiding flattened partitions have similar properties and structure as $312$-avoiding flattened partitions studied in Subsection \ref{ffb}.
\end{rem}
FINAL REMARK: There are many subsequent follow-up questions one can ask about flattened partitions avoiding some patterns, but these will be addressed separately.
\section*{Acknowledgements}
The first author acknowledges the financial support extended by the Swedish Sida Phase-IV bilateral program with Makerere University. Special thanks go to Prof.\ J\"orgen Backelin, Dr.\ Paul Vaderlind and Dr.\ Per Alexandersson of Stockholm university - Dept. of Mathematics, and Dr. Alex Samuel Bamunoba of Makerere university - Dept. of Mathematics for all their valuable inputs and suggestions. Many thanks to my colleagues from CoRS - Combinatorial Research Studio, for lively discussions and comments.
\section*{References}

\end{document}